\newtheorem{theorem}{Theorem}[section]
\newtheorem{lemma}{Lemma}[section]
\def\clap#1{\hbox to 0pt{\hss#1\hss}}
\begin{document}
\title[Identities common to four abelian group operations]
      {Groupoid identities common to four abelian group operations}

\author{David Kelly}
\address{Department of Mathematics\\
         University of Manitoba\\
         Winnipeg, Manitoba, Canada R3T 2N2}
\keywords{finitely based, finite basis, medial groupoid, variety}
\subjclass[2000]{08B05}
\date{July 6, 2008}
\begin{abstract}
We exhibit a finite basis  $\mathcal{M}$  for a certain variety $\mathbf{V}$  of medial
groupoids.  The set  $\mathcal{M}$ consists of the medial law
$(xy)(zt)=(xz)(yt)$  and
five other identities involving four variables.  The
variety $\mathbf{V}$ is generated by the four groupoids  $\pm x\pm y$  on the
integers.  Since $\mathbf{V}$ is a very natural variety, proving it to be
finitely based should be of interest.

In an earlier paper, we made a conjecture which implies that $\mathbf{V}$
is finitely based.  In this paper, we show that $\mathbf{V}$ is finitely based
by proving that  $\mathcal{M}$  is a basis.  Based on our proof,
we think that our conjecture will be difficult to prove.

As we explain in the paper, the variety  $\mathbf{V}$  corresponds to the
Klein $4$-group.  We use this group to show that  $\mathbf{V}$  has a basis
consisting of interchange laws.  (We define ``interchange law''  in
the introduction.)  We give more examples of finite groups where
such a basis exists for the corresponding groupoid variety.  We
also give examples of finite groups where such a basis is
impossible.  The second case is a further challenge to anyone
who tries to prove our conjecture.

We used four medial groupoids to define $\mathbf{V}$.  We also present a
finite basis for the variety generated by any proper subset of
these four groupoids.  In an earlier paper with R.~Padmanabhan,
we gave the corresponding finite bases when the constant zero
is allowed.
\end{abstract}
\maketitle
\setcounter{section}{-1}
\section{Introduction}

 The overview given in the abstract was designed to
motivate the reading of our intricate arguments.
In the next paragraph, we define the sets $\mathcal{M}$
and  $\Sigma$  of identities.  In fact,  $\Sigma$ is the set of identities valid
in the variety $\mathbf{V}$ that was defined in the abstract.
Although it
is ``obvious'' that  $\mathcal{M}$  is a basis for  $\mathbf{V}$, a proof is required.  The
conjecture we made in \cite{dK08}, described later in this introduction, 
implies that  $\mathbf{V}$
is finitely based.
 
 Let  $\Sigma$  be the set of groupoid identities that are satisfied by
the four binary operations  $\pm x\pm y$  in every abelian group.
Theorem 1.1 states that the following six identities form an
independent basis for  $\Sigma$.
\par\textup{(M1)} \  $(xy)(zt)=(xz)(yt)$
\par\textup{(M2)} \  $(xy)(zt)=(ty)(zx)$
\par\textup{(M3)} \ $((xy)z)t=((xt)z)y$
\par\textup{(M4)} \ $(x(yz))t=(x(tz))y$
\par\textup{(M5)} \ $x((yz)t)=z((yx)t)$
\par\textup{(M6)} \ $x(y(zt))=z(y(xt))$
\par\noindent The identity (M1) is called the \emph{medial} \emph{law}.  Let  $\mathcal{M}$  denote the
set of the above six ``mutation laws.''  When the constant zero
is allowed, Kelly and Padmanabhan \cite{KP85} found a finite basis
for the corresponding set of identities.

 When  $G$  is a  multiplicative abelian group generated by $\alpha$
and $\beta$, we write  $\Sigma(G;\alpha,\beta)$  for the set of groupoid identities that
are satisfied in the integral group ring  $\mathbb{Z}[G]$  when the binary
operation is  $\alpha x+\beta y$.  Kelly and Padmanabhan \cite{KP85} showed
that $\Sigma$ equals $\Sigma(\mathbf{K}\mathbf{L};\alpha,\beta)$, where  $\mathbf{K}\mathbf{L}=\{\mspace{1mu}\alpha,\beta,\gamma,1\mspace{1mu}\}$  is the Klein
$4$-group.  Our result for $\Sigma$  supports the conjecture of \cite{dK08}
that $\Sigma(G;\alpha,\beta)$ is finitely based whenever $G$  is finite.

 A term is \emph{linear} when no variable occurs more than once.
If  $p$ is a linear term and we interchange two variables in  $p$ to
form  $q$,  then $p=q$  is an \emph{interchange law}.  Observe that each
identity in $\mathcal{M}$  is an interchange law.

 We present finite bases for the identities satisfied by any
proper subset of the four abelian group operations   $\pm x\pm y$.   All
these bases are shown in Table 1 of \S2.  (When the constant
zero is allowed, the corresponding finite bases appear in
\cite{KP85}.)  To justify Table 1, four bases must be verified, which
is done in Sections 3, 5, 6 and 7.  Sections 5 and 6 each require
a technical result from \S4.

 For finite $G$, Theorem 2.2 of \cite{dK08} characterizes when
$\Sigma(G;\alpha,\beta)$ has a basis consisting of interchange laws.  For certain
finite groups---including the Klein $4$-group---Theorem 9.2
simplifies this characterization.  The final two sections of the
paper concern this new characterization.

   An identity is \emph{balanced}  when each variable occurs
equally often on each side.   Any set of balanced identities is
called \emph{balanced}.   An identity is \emph{linear} if it is balanced and each
side is linear.  We allow $G$  to be an arbitrary $2$-generated
abelian group.  (In Sections  1 to 7, $G$  is always the Klein
$4$-group.)  Each identity $p=q$ in  $\Sigma(G;\alpha,\beta)$  is balanced.  Each
identity in  $\Sigma(G;\alpha,\beta)$  can be obtained by identifying variables in
a linear identity that is in $\Sigma(G;\alpha,\beta)$.  Thus, the linear identities of
$\Sigma(G;\alpha,\beta)$  form a basis for $\Sigma(G;\alpha,\beta)$.

 A \emph{tree} always means a full binary tree, i.e., a finite rooted
tree (growing downwards) in which each non-leaf has exactly
two children.  Every subterm of a linear term $p$  corresponds to a
vertex of the corresponding tree $P$  and vice-versa.  (An
uppercase letter always denotes the corresponding tree.)  A
variable corresponds to a trivial tree.  The tree for the linear term
$\emph{pq}$ is obtained by substituting the trees $P$ and $Q$  for the leaves of
the two-leaved tree.

 The \emph{rank} of a term is the number of its variable occurrences
and the \emph{rank} of a tree is the number of its leaves.   A  \emph{left edge}
(or $\alpha$-\emph{edge}) of a tree is an edge that descends to a left child.   A
vertex that is not a leaf is called \emph{internal}.

 The \emph{color} of a variable in a linear term is its coefficient in
the polynomial ring over  $\mathbb{Z}[G]$ when the binary operation $xy$ is
replaced by $\alpha x+\beta y$.  We color the vertices of the corresponding
tree with elements of  $G$.  We color the root with the identity
element and then descend the tree; the color for the left child is
$\alpha$  times that of the parent  and, for the right child,  $\beta$  times.  On
the leaves of the tree, this coloring agrees with the coloring of
the variables in the linear term.

   A linear identity $p=q$  is in  $\Sigma(G;\alpha,\beta)$  iff  every variable
has the same color in  $p$  and $q$.  Thus, an interchange law is in
$\Sigma(G;\alpha,\beta)$  exactly when the two interchanged variables have the
same color.  Figure 1 shows the tree for each mutation law.
Black-filled circles correspond to the interchanged variables;
their common color (an element of the Klein $4$-group) is also
shown.

\begin{figure}
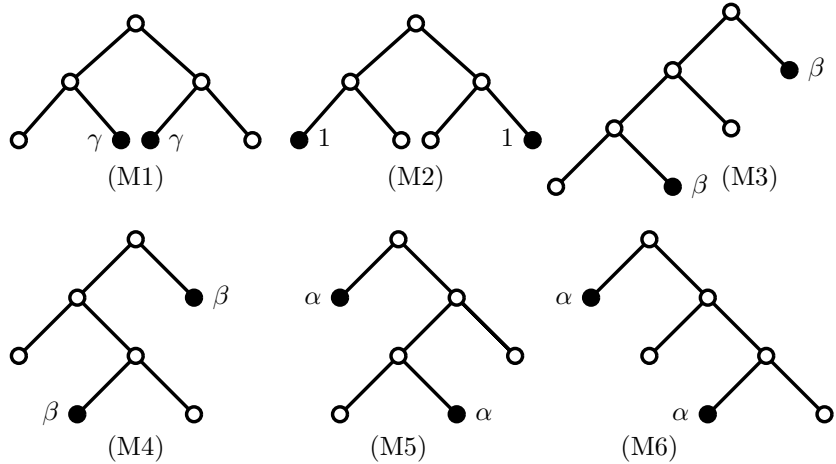

\begin{overpic} {figure1}
\put(14.18,35.75){\llap{$\gamma$}}
\put(21.64,35.75){$\gamma$}
\put(17.91,31.4){\clap{(M1)}}
\put(38.39,35.75){$1$}
\put(60.48,35.75){\llap{$1$}}
\put(49.43,31.4){\clap{(M2)}}
\put(93.55,43.63){$\beta$}
\put(80.42,30.5){$\beta$}
\put(86.99,31.4){\clap{(M3)}}
\put(26.57,18.02){$\beta$}
\put(9.25,4.88){\llap{$\beta$}}
\put(17.91,0.94){\clap{(M4)}}
\put(38.81,18.02){\llap{$\alpha$}}
\put(56.12,4.88){$\alpha$}
\put(47.46,0.94){\clap{(M5)}}
\put(67.04,18.02){\llap{$\alpha$}}
\put(80.18,4.88){\llap{$\alpha$}}
\put(75.7,0.94){\clap{(M6)}}
\end{overpic}
\caption{Trees for the mutation laws}
\end{figure}

 Whenever we prove an interchange law from a set of
interchange laws, we stop immediately after successfully
interchanging the two distinguished variables in some derived
term.  Such a proof can be completed by re-applying, in the
reverse order, all the other interchanges that were used.  In any
proof of an interchange law by induction on the rank, we can
stop whenever the two variables are in a proper subterm; we
shall say that the two variables are ``closer.''  We can also stop
when the corresponding two leaves are in a proper subtree.  (By
replacing a suitable internal vertex by a leaf, the original two
leaves are in a tree of lower rank.)

 Let $x$, $r$ and $s$ be vertices of the same color in a tree.  If $x$ is
a leaf,  and $r$ and $s$ are incomparable, then we can replace  $r$
with $x$  by using interchange laws.  The verification is easy.  If  $r$
does not contain  $x$,  then interchange  $r$  and  $x$.  If  $r$  does
contain  $x$,  then first interchange  $r$  and  $s$. This simple
observation is called the ``double rule.''

   The notation $p\equiv q$ means that the terms $p$ and $q$ are
identical.  We write $r\le p$  to indicate that   $r$ is a subterm of  $p$.

 We shall use the ``local'' rule for equational derivation of
McNulty \cite{gM82}.  A \emph{substitution instance} of an identity or a
term is produced by replacing its variables by terms.  We fix a
set of identities $\Pi$ in an arbitrary type and write $p\sim q$ when the
term $q$  is the result of replacing one occurrence of the subterm $r$
in $p$ by the term $s$,  where  $r=s$ or its opposite is a substitution
instance of an identity in $\Pi$. The identity $p=q$ is a consequence of
$\Pi$ iff there is a sequence $p\equiv p_1\sim p_2\sim\dots\sim p_n\equiv q$ for some $n\ge1$.

 Each term $p$ has a \emph{dual} $\widetilde{p}$, obtained by replacing the
groupoid operation by its opposite.  Forming the dual
interchanges the colors $\alpha$ and $\beta$.  The \emph{dual} of an identity $p=q$ is
the identity $\widetilde{p}=\widetilde{q}$.  A set of identities that is closed under duality
is called \emph{self-dual}.  In particular, $\Sigma=\Sigma(\mathbf{K}\mathbf{L};\alpha,\beta)$ is self-dual.  The
\emph{dual} of a tree is its mirror image.  Henceforth, colors are
elements of the Klein $4$-group.

\section{Independent finite basis for  $\Sigma$}

 Let $S$ be the semigroup with 1 that is freely generated by
the ``letters'' $\alpha$ and  $\beta$.  For each $\sigma\in S$, we define (inductively) a
linear term $\overline{\sigma}x$  in the variable $x$  and the \emph{auxiliary variables}
$z_1$, $z_2$, $z_3$,  \dots .  For $\sigma\in S$, we write $|\sigma|$ for its length.  We begin by
defining $\overline{1}x\equiv x$.  For $\sigma\in S$,  $\overline{\alpha\sigma}x\equiv(\overline{\sigma}x)z_{|\sigma |+1}$  and  $\overline{\beta\sigma}x\equiv z_{|\sigma |+1}(\overline{\sigma}x)$.
Observe that the auxiliary variables are numbered beginning at
the maximum depth.  An example is $\overline{\beta\beta\alpha}x\equiv z_3(z_2(xz_1))$.   (This
definition is from \cite{dK08}.)

 Following \cite{dK08}, the \emph{signature} of a descending path from
$u$ to $v$ in a tree is the product in  $S$ (from left to right) of the edge
labels $(\alpha$ or $\beta)$ starting at $u$. We allow $u$  and  $v$  to be equal (in
which case, 1 is the signature).  In the tree for the linear term
$\overline{\sigma}x$,  the path to $x$ has signature $\sigma$.  (When the initial vertex is
unspecified, it is understood to be the root.)  If there is a
descending path in a tree with signature $\sigma$,  then $\sigma$-\emph{terminator} is
our name for final vertex of this path.

 In this section, we call a signature \emph{compressed} when it is
compressed modulo $\mathcal{M}$  in the sense of \cite{dK08}. A signature is
not compressed exactly when two vertices of the same color in
the tree for  $\overline{\sigma}x$ can be interchanged (using  $\mathcal{M}$) so that the new
tree has a shorter path to $x$.  Of course, one of the interchanged
vertices must be an auxiliary variable.

\begin{lemma}
The compressed signatures modulo $\mathcal{M}$  are
$\alpha^k$, $\beta^k$, $\alpha\beta^k$ and $\beta\alpha^k$  for $k\ge0$.
\end{lemma}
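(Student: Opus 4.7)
The plan is to establish both directions of the characterisation: (a) each $\sigma$ in the listed families is compressed, and (b) every other $\sigma$ is not.

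First I would catalogue the coloured tree for $\overline\sigma x$. Writing $\sigma = s_1 s_2 \cdots s_n$, let $R_d$ denote the subterm at depth $d$ on the path from the root to $x$; then $R_d$ has colour $s_1 s_2 \cdots s_d$ (the product in the Klein $4$-group), and $R_n = x$. The off-path vertices are precisely the auxiliary leaves $z_1, \ldots, z_n$: the leaf $z_i$ is the sibling of $R_{n-i+1}$, at depth $n-i+1$, with colour $s_1 \cdots s_{n-i}\,\bar{s}_{n-i+1}$, where $\bar{s}$ denotes the opposite letter.

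Next I would translate ``not compressed'' into a combinatorial condition on $\sigma$. A shortening of the path to $x$ is, via the double rule, the interchange of two incomparable same-colour vertices, one containing $x$ and the other strictly shallower; since the only off-path vertices are the leaves $z_i$, the shallow partner must be some $z_i$ and the deeper partner must be some $R_d$ with $n-i+1 < d$. Setting the colours of $R_d$ and $z_i$ equal, cancelling the common prefix $s_1 \cdots s_{n-i}$, and using the Klein-group identity $s\bar{s} = \gamma$, this reduces to the condition that $\sigma$ contains a substring $s_j s_{j+1} \cdots s_d$ with $2 \le j \le d \le n$ evaluating to $\gamma$ in the Klein $4$-group.

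With this criterion in hand, the two directions are quick. For (a), each of $\alpha^k$, $\beta^k$, $\alpha\beta^k$, $\beta\alpha^k$ has the property that every substring starting at a position $\ge 2$ uses only one letter, so its value lies in $\{1, \alpha, \beta\}$ and is never $\gamma$. For (b), assume $s_1 = \alpha$ (the case $s_1 = \beta$ is dual). If $\sigma$ is not $\alpha^k$, some $\beta$ occurs in $\sigma$; if $\sigma$ is also not $\alpha\beta^k$, then either $s_2 = \alpha$ with a later $\beta$, yielding an $\alpha\beta$ substring at positions $\ge 2$, or $s_2 = \beta$ with a later $\alpha$, yielding a $\beta\alpha$ substring at positions $\ge 2$; in either case a substring of value $\gamma$ is exhibited.

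The main obstacle I expect is justifying the invocation of the double rule --- that is, verifying that the swap $R_d \leftrightarrow z_i$ is genuinely derivable from $\mathcal{M}$. Since $z_i$ is a leaf and therefore does not contain $x$, whereas $R_d$ does, one applies the double rule with $r := R_d$ and $s := z_i$ in its second clause; the only remaining bookkeeping is checking that the relevant four-leaf ambient subtree in each case matches one of (M1)--(M6) or its dual, which is straightforward from the catalogue in Figure~1.
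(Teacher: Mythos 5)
Your argument is, in substance, the paper's own proof in different notation: you compute the colours of the path vertices $R_d$ and of the auxiliary leaves $z_i$, conclude that the four listed families admit no off-path leaf sharing a colour with a strictly deeper path vertex (hence are compressed), and for every other signature you exhibit two adjacent differing letters $s_{m-1}s_m$ at positions $\ge 2$; together with $s_{m-2}$ this is exactly one of the paper's four forbidden factors $\alpha^2\beta$, $\alpha\beta\alpha$, $\beta^2\alpha$, $\beta\alpha\beta$, each disposed of by a single application of (M3)--(M6). One caution about your last paragraph: do not lean on the ``double rule'' to justify the shortening swap. At this point in the paper that rule is not available --- it presupposes that $\mathcal{M}$ can interchange arbitrary incomparable same-coloured vertices, which is the content of Theorem 1.1 and rests on this very lemma --- and, relatedly, your ``iff'' criterion for general substrings $s_j\cdots s_d$ of value $\gamma$ overreaches in the backward direction, since for $d-j$ large the swap of $z_i$ with $R_d$ is not contained in any rank-$4$ subtree and so is not a single mutation-law instance. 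Fortunately your second justification is the one that matters and it suffices: the witnesses you actually produce are adjacent, so $z_i$ and $R_d$ sit at depths differing by $2$ inside a rank-$4$ caterpillar, and the swap there is literally one of (M3)--(M6) (or, read off Figure~1, the paper's explicit treatment of $\overline{\alpha^2\beta}x$ and $\overline{\alpha\beta\alpha}x$ plus duality).
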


\begin{proof}
In the tree for  $\overline{\alpha^k}x$, the internal vertices and $x$  have
color 1 or $\alpha$, while each auxiliary variable has color $\beta$ or $\gamma$. In the
tree for  $\overline{\alpha\beta^k}x$,  the internal vertices and $x$ have color $\alpha$ or $\gamma$,
while each auxiliary variable has color  1 or $\beta$.   Thus, by duality,
all the given signatures are compressed.

 In  $\overline{\alpha^2\beta}x$  or $\overline{\alpha\beta\alpha}x$, the variables $x$ and $z_3$ can be
interchanged by (M3) or (M4).  Therefore, by duality, the
semigroup subterms  $\alpha^2\beta$, $\alpha\beta\alpha$, $\beta^2\alpha$  and  $\beta\alpha\beta$ must be
excluded.  The listed signatures are exactly the ones that remain.
\end{proof}

 The following lemma is a special case of Theorem 9.2.  We
shall give a proof that only uses the characterization theorem of
\cite{dK08}.  The matrix in the following proof is explained in  \S9,
where we shall also calculate---in a very simple way---its
determinant.

\begin{lemma}
The interchange laws form a basis for  $\Sigma$.
\end{lemma}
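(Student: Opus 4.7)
The plan is to invoke the characterization theorem of \cite{dK08} and check its hypothesis directly for the Klein $4$-group rather than trying to derive the lemma from a basis like $\mathcal{M}$ (which would already be circular if we eventually want Lemma~1.2 to help prove Theorem~1.1). The characterization says, essentially, that $\Sigma(G;\alpha,\beta)$ has a basis of interchange laws iff a certain system built from the compressed signatures has full rank; equivalently, a certain matrix whose rows are indexed by compressed signatures of a given length and whose columns are indexed by the non-identity colors of $G$ is nonsingular.

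The first step is to pin down the compressed signatures using Lemma~1.1: these are exactly $\alpha^k$, $\beta^k$, $\alpha\beta^k$, and $\beta\alpha^k$ for $k\ge 0$. For the Klein $4$-group, the three non-identity colors are $\alpha$, $\beta$, and $\gamma$, and at each rank I would tabulate, for each compressed signature $\sigma$, the color of each auxiliary variable $z_i$ in $\overline{\sigma}x$ (read off as a product of $\alpha$'s and $\beta$'s along the tail of $\sigma$). This gives the rows of the matrix referenced in the proof.

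The second step is to package this tabulation as a square matrix whose size matches the number of compressed signatures at the chosen rank. The characterization of \cite{dK08} then tells us that the interchange laws suffice to derive all of $\Sigma$ precisely when this matrix is invertible. So the proof reduces to exhibiting the matrix and asserting its determinant is nonzero, with the promise (as the author flags) that the actual computation will be performed in \S9 by a clean combinatorial argument.

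The main obstacle I anticipate is purely expository: one has to describe the matrix precisely enough that the reader can see why it is the right matrix, i.e., why its nonsingularity really does encode the derivability of every linear identity of $\Sigma$ from interchange laws. Once the matrix is set up correctly, the determinant calculation itself looks to be routine—indeed the author explicitly says it is done ``in a very simple way'' in \S9—so the substance of the proof is the translation of the abstract criterion of \cite{dK08} into this concrete linear-algebra condition indexed by the compressed signatures of Lemma~1.1.
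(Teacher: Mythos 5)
Your overall strategy is the right one and matches the paper: Lemma~1.2 is proved by invoking Theorem~2.2 of \cite{dK08} and checking that a certain matrix is nonsingular, not by deriving interchange laws from $\mathcal{M}$ (which, as you note, would be circular). But the matrix you describe is not the one the characterization theorem asks for, and this is a genuine gap. The criterion from \cite{dK08}, as the paper spells out in \S9, is that the vectors $\mathbf{v}_g=-\mathbf{e}_g+\mathbf{e}_{\alpha g}+\mathbf{e}_{\beta g}$, for $g$ ranging over \emph{all} elements of $G$ (including the identity), are linearly independent. The relevant matrix is therefore $|G|\times|G|$ with both rows and columns indexed by group elements: the row for $g$ has $-1$ in column $g$ and $+1$ in columns $\alpha g$ and $\beta g$. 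For the Klein $4$-group this is exactly the $4\times4$ matrix displayed in the paper's proof, whose eigenvalues $1,-1,-1,-3$ (computed in \S9 via the circulant structure) give determinant $-3\ne 0$.

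Your proposed matrix --- rows indexed by ``compressed signatures of a given length,'' columns by the non-identity colors of $G$ --- does not encode this condition. The compressed signatures of Lemma~1.1 form an infinite family ($\alpha^k$, $\beta^k$, $\alpha\beta^k$, $\beta\alpha^k$ for all $k\ge 0$), so there is no canonical finite square matrix indexed by them, and restricting the columns to the three non-identity colors drops the identity coordinate, which genuinely matters (the row $\mathbf{v}_1$ is one of the four vectors whose independence must be checked). Lemma~1.1 in fact plays no role in the proof of Lemma~1.2; it is used only later, in the derivation of all interchange laws from $\mathcal{M}$ in Theorem~1.1. To repair your argument you need only replace your matrix by the $\{\mathbf{v}_g\}$ matrix and verify its nonsingularity, which for $\mathbf{KL}$ is a one-line determinant computation.
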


\begin{proof}
Since the following matrix is nonsingular, the
interchange laws form a basis for  $\Sigma$  by Theorem 2.2 of \cite{dK08}.
\begin{equation*}
\left[
\begin{array}{rrrr}
-1  &1  &1  &0\\
 1 &-1  &0  &1\\
 1  &0 &-1  &1\\
 0  &1  &1 &-1
\end{array}
\right]  \qedhere
\end{equation*}
\end{proof}

\begin{theorem}
The set $\mathcal{M}$ is an independent basis for  $\Sigma$.
\end{theorem}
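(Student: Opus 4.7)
By Lemma 1.2 the interchange laws form a basis for $\Sigma$, so to show that $\mathcal{M}$ is a basis it suffices to derive every interchange law from $\mathcal{M}$. The plan is to argue by induction on the rank of the linear term $p$ in an interchange law $p=q$ that swaps two same-color variables $a$ and $b$; small ranks are handled directly by the shape of the mutation laws themselves.

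For the inductive step, let $v$ be the lowest common ancestor of the leaves of $a$ and $b$ in the tree corresponding to $p$. If $v$ is not the root, then $a$ and $b$ lie in a proper subterm of $p$, and the induction hypothesis brings them ``closer'' in the sense of the introduction. Otherwise $v$ is the root, and we may assume $a$ is in the left subtree (so its path signature $\sigma_a$ from the root begins with $\alpha$) while $b$ is in the right subtree (so $\sigma_b$ begins with $\beta$). Viewing the off-path subtrees as auxiliary variables and invoking Lemma 1.1, we may independently compress $\sigma_a$ to $\alpha^k$ or $\alpha\beta^k$ and $\sigma_b$ to $\beta^j$ or $\beta\alpha^j$. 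The requirement that $a$ and $b$ have the same color then admits only two shapes for the pair $(\sigma_a,\sigma_b)$: either $(\alpha^{2m},\beta^{2n})$ with $m,n\ge 1$ (common color $1$), or $(\alpha\beta^{2m+1},\beta\alpha^{2n+1})$ (common color $\gamma$), since a compressed signature starting with $\alpha$ yields color in $\{1,\alpha,\gamma\}$ while one starting with $\beta$ yields color in $\{1,\beta,\gamma\}$.

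The minimal cases $(\alpha^2,\beta^2)$ and $(\alpha\beta,\beta\alpha)$ within these two families are precisely the configurations of (M2) and (M1), respectively, so they yield the desired swap at once. For larger values of $m$ or $n$, I would invoke (M3)--(M6), each of which (see Figure~1) interchanges a variable at depth two with one at depth one along a single branch; each such step shortens one of the compressed signatures, and after finitely many steps either $(\alpha^2,\beta^2)$ or $(\alpha\beta,\beta\alpha)$ is reached, or $a$ and $b$ end up together in a proper subtree so that induction applies. For the independence part, I would exhibit, for each of the six mutation laws, a small groupoid satisfying the other five but not that one, with the self-duality of $\mathcal{M}$---pairing (M3) with (M4) and (M5) with (M6)---halving the work.

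The main obstacle will be the case analysis when $v$ is the root: verifying that the compression of Lemma 1.1 continues to apply when the auxiliary variables stand for arbitrary subterms rather than single leaves, and that the path-shortening mutations (M3)--(M6) genuinely reduce the compressed signatures without disturbing the color-matching of $a$ and $b$, will require careful bookkeeping. Finding economical groupoid models for the six independence claims should be the secondary challenge.
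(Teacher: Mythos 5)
Your overall strategy (reduce to interchange laws via Lemma~1.2, then induct on rank and normalize path signatures via Lemma~1.1) matches the paper's, but the case analysis at the heart of the inductive step is wrong. You compress the \emph{full} root-to-leaf signatures and conclude that, since a compressed signature beginning with $\alpha$ has color in $\{1,\alpha,\gamma\}$ and one beginning with $\beta$ has color in $\{1,\beta,\gamma\}$, the common color of the two distinguished variables can only be $1$ or $\gamma$. That conclusion is false: the common color can be $\alpha$ or $\beta$, and these are precisely the cases where (M5) and (M6) are indispensable. Indeed (M5) itself, $x((yz)t)=z((yx)t)$, interchanges two variables of color $\alpha$ lying on opposite sides of the root, with root-paths $\alpha$ and $\beta\alpha\beta$; the second path is not compressed, but compressing it cannot produce a compressed signature of color $\alpha$ beginning with $\beta$ (none exists), so your dichotomy simply has no slot for this configuration. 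The hidden flaw is the word ``independently'': a compression move on the root-to-$b$ path interchanges $b$ (or a vertex on its path) with an off-path subtree, and one of those off-path subtrees is the entire opposite half of the tree containing $a$. Such a move can relocate $a$, or move $b$ across the root without bringing the two leaves into a common proper subtree, so neither your normal form nor the fallback ``they end up in a proper subtree, apply induction'' is guaranteed. The paper avoids all of this by writing $p\equiv qr$ with $x\le q$, $y\le r$ and compressing only the signatures $\sigma$ (from $q$ to $x$) and $\tau$ (from $r$ to $y$) \emph{inside} their respective subtrees; the full signatures $\alpha\sigma$ and $\beta\tau$ are then not compressed, all four colors occur, and the colors $\alpha$ and $\beta$ are handled by a separate ``exponent reduction'' argument using (M3)--(M6) applied to specific terminators.

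A secondary divergence: for independence you propose exhibiting, for each mutation law, a groupoid satisfying the other five. The paper instead gives a purely syntactic argument using McNulty's local derivation rule (singleton sets like $\{(ux)(yu)\}$ are closed under the remaining laws, and for (M3)--(M6) no other law of $\mathcal{M}$ can even be applied locally to a rank-4 term of the relevant shape). Your model-theoretic route is legitimate in principle but you have not produced the six models, and note that duality only pairs the laws, so you would still need at least four distinct separating algebras. The independence half is therefore incomplete but repairable; the basis half, as written, is not.
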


\begin{proof}
By Lemma 1.2, the interchange laws form a basis
for  $\Sigma$.  Therefore, it suffices to derive each interchange law from
$\mathcal{M}$ .

 Let  $x$ and $y$ be distinct variables of the same color $c$  in the
linear term  $p$.  We can assume that  $p\equiv qr$ , with $x\le q$  and $y\le r$.
By induction on the rank of $p$, we shall show that  $\mathcal{M}$  allows us
to interchange  $x$ and $y$ in $p$.

  Let  $\sigma$ be the signature of the path from $q$  to  $x$ and let  $\tau$
be the signature of the path from $r$   to $y$.  By induction, we can
assume that $q\equiv\overline{\sigma}x$, $r\equiv\overline{\tau}y$, and that both $\sigma$ and $\tau$ are compressed.
(New auxiliary variables are used in $\overline{\tau}y$.)  We shall consider the
four possible values for $c$.  For each value of  $c$,  Lemma 1.1
determines the possible values for $\sigma$ and $\tau$, subject to the
condition that $\alpha\sigma$ and $\beta\tau$  both evaluate to $c$  in  
$\mathbf{K}\mathbf{L}$.

 If  $\sigma=\alpha^k$  for $k\ge2$, then interchange $r$  and the
$\alpha^2\beta$-terminator by (M3) to bring $x$ and $y$ closer.  If  $\sigma=\beta^k$  for  $k\ge2$,
then interchange $r$  and the $\alpha\beta\alpha$-terminator by (M4) to bring $x$
and $y$ closer.  Therefore, we can assume that $k$ is 0 or 1
whenever  $\sigma=\alpha^k$ or  $\sigma=\beta^k$.  By duality,   $k$ is 0 or 1 whenever
$\tau=\alpha^k$ or  $\tau=\beta^k$.  We call the procedures of this paragraph
``exponent reduction.''

 Let $c=\alpha$.  By exponent reduction, we can assume that $\sigma=1$.
In other words,  $q\equiv x$.  Let  $\tau=\beta\alpha^l$  for odd  $l$.  By (M6),  we can
interchange $x$ and the $\beta^2\alpha$-terminator.  We have either
interchanged $x$ and $y$ or the new value of $\sigma$ is $\alpha^{l-1}$ for $l\ge3$.  In the
latter case, apply exponent reduction.  We can now assume that
$\tau=\alpha\beta^l$  for odd $l$.  By (M5),  interchange $x$ and the
$\beta\alpha\beta$-terminator.  If $l=1$, then we have just interchanged  $x$ and  $y$.
Otherwise, $l\ge3$ and the new value of  $\sigma$ is $\beta^{l-1}$,  so that we are
done by exponent reduction.

  The $c=\beta$   case now follows by duality.  In the two
remaining cases, neither $\sigma$ nor $\tau$ is trivial.  If  $\sigma$ and  $\tau$ begin with
the same letter, then the medial law can be used to bring $x$ and $y$
closer.  Therefore, we can assume that  $\sigma$ and  $\tau$ begin with
different letters.

 Let $c=\gamma$.  We first assume that  $\sigma=\beta\alpha^k$  for even  $k$.
Therefore, $\tau=\alpha\beta^l$  for even  $l$.   Use the medial law to transform
$\sigma$  into $\beta^{l+1}$  and   $\tau$  into $\alpha^{k+1}$.  By exponent reduction, $k=l=0$, so
that we can interchange $x$ and $y$ by the medial law.  The
remaining case is that $\sigma=\beta^k$ and $\tau=\alpha^l$ with $k$ and $l$ odd.    By
exponent reduction, $k=l=1$  and we can apply the medial law to
interchange $x$ and $y$.

 Finally, let $c=1$.  We first assume that  $\sigma=\alpha\beta^k$  for even  $k$.
Therefore, $\tau=\beta\alpha^l$  for even  $l$.   Use (M2) to transform  $\sigma$  into
$\alpha^{l+1}$  and   $\tau$  into $\beta^{k+1}$.  By exponent reduction, $k=l=0$, so that
we can interchange $x$ and $y$ by (M2).  The remaining case is that
$\sigma=\alpha^k$ and $\tau=\beta^l$ with $k$ and $l$ odd.    By exponent reduction,
$k=l=1$ and we can apply (M2)  to interchange $x$ and $y$. This
completes the proof that  $\mathcal{M}$  is a basis.

 We now show that  $\mathcal{M}$  is independent.  We consider local
derivations using  $\mathcal{M}$  without one of its identities.  Without (M1),
$\{\mspace{1mu}(ux)(yu)\mspace{1mu}\}$ is closed.  Without (M2), $\{\mspace{1mu}(xu)(uy)\mspace{1mu}\}$ is closed.  Let
$p=q$ be one of the four remaining identities.  Since each side of
every identity in $\mathcal{M}$ has rank 4, no other identity in $\mathcal{M}$  can be
used in a local derivation of $p=q$.  (See Figure 1.)
\end{proof}

\section{Subsets of abelian group operations}

 We write the four abelian group operations as follows:
$f_1(x,y)=x+y$,  $f_2(x,y)=x-y$,  $f_3(x,y)=-x+y$ and  $f_4(x,y)=-x-y$.  For
any proper subset  $K$  of $\{\mspace{1mu}1,2,3,4\mspace{1mu}\}$, we write   $\Sigma_K$  for the
groupoid identities that are satisfied in $\mathbb{Z}$ by   $f_k$  for every $k\in K$.
If $1\in K$,  then $\Sigma_K$ is balanced because it is a subset of  $\Sigma_1$.  On
the other hand, $\Sigma_{2,3,4}$  is not balanced.

 Table 1 gives a finite basis for every   $\Sigma_K$.  Observe that
duality interchanges $f_2$ and $f_3$.  The source for each basis is also
given in the table.  Up to duality, there are four new results in
the table.

\begin{table} \begin{center} \begin{tabular}{c l l}
\hline
 $K$  &Basis for $\Sigma_K$   &Reference  \\ \hline
1 &$x(yz)=(xy)z$, $xy=yx$ &folklore  \\
2 &$x(y(z(xy)))=z$ &Tarski \cite{aT38}  \\
3 &$(((yx)z)y)x=z$ &duality  \\
4 &(M1), $xy=yx$, $x(xy)=y$ &Je\v{z}ek and Kepka \cite{JK83}  \\
$1, 2$ &(M1), $(xy)z=(xz)y$, $x(zy)=y(zx)$ &Kelly \cite{dK08} \\
$1, 3$ &(M1), $z(yx)=y(zx)$, $(yz)x=(xz)y$ &duality  \\
$1, 4$ &(M1), $xy=yx$, $x(z(ty))=y(z(tx))$ &Kelly \cite{dK08}  \\
$2, 3$ &(M1), $x^2=y^{2}$, $(xx^2)x^2=x$ &Kelly \&{} Padmanabhan \cite{KP85}  \\
$2, 4$ &(M2), $x(xy)=y$ &Theorem 3.2  \\
$3, 4$ &(M2), $(yx)x=y$ &duality  \\
$1,2,3$ &$\mathcal{M}$, $(x^2y)z^2=(z^2y)x^2$, $(xy^2)z^2=(xy^2)z^2$ &Theorem 5.1  \\
$1,2,4$ &$\mathcal{M}$,  $x(x(yz))=(x(zy))x$ &Theorem 6.1  \\
$1,3,4$ &$\mathcal{M}$, $((zy)x)x=x((yz)x)$ &duality  \\
$2,3,4$ &(M1), (M2), $(xy^2)y^2=x$ &Theorem 7.2  \\ \hline \\
 
\end{tabular} 
\caption{Finite bases for all selections of abelian group operations} 
\end{center} \end{table}

 In fact, Gr\"atzer and Padmanabhan \cite{GP78} proved that  $\Sigma_{2,3}$
is one-based.  Padmanabhan \cite{rP69} determined all the terms $p$
of rank five such that $\{\mspace{1mu}p=x\mspace{1mu}\}$ is a basis for $\Sigma_2$; moreover, he
showed that five is the minimum rank for a term  $p$ so that
$\{\mspace{1mu}p=x\mspace{1mu}\}$ is a basis for $\Sigma_2$.

 For a term $p$  in variables  $X$,  we write $[p]$ for its value  $\sum($
$a_xx\mid x\in X)$ when the product $xy$  is replaced by $\alpha x+\beta y$.  In
particular, $p=q$ is in $\Sigma$  iff $[p]=[q]$.  Observe that each coefficient
$a_x$  is in $\mathbb{N}[\mathbf{K}\mathbf{L}]$ and that only finitely many coefficients are
nonzero.  Our proof of the following result uses \cite{KP85}, where
the constant zero was allowed.

\begin{lemma}
In the following three statements, each $n_x$  is a suitable integer.
For terms $p$ and $q$,
\begin{enumerate}[\rm(i)]
\item $p=q$ is in $\Sigma_{1,2,3}$ iff \ $[p]-[q]=\sum(n_x(\alpha+\beta-\gamma-1)x\mid x\in X)$.
\item $p=q$ is in $\Sigma_{1,2,4}$ iff \ $[p]-[q]=\sum(n_x(\alpha-\beta+\gamma-1)x\mid x\in X)$.
\item $p=q$ is in $\Sigma_{2,3,4}$ iff \ $[p]-[q]=\sum(n_x(\alpha+\beta+\gamma+1)x\mid x\in X)$.
\end{enumerate}
\end{lemma}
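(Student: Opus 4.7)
The plan is to reduce each of the three equivalences to a one-dimensional kernel calculation inside the group ring $\mathbb{Z}[\mathbf{K}\mathbf{L}]$. For each operation $f_k$ the substitution $xy \mapsto \phi_k(\alpha)x+\phi_k(\beta)y$ extends to a ring homomorphism $\phi_k\colon\mathbb{Z}[\mathbf{K}\mathbf{L}]\to\mathbb{Z}$, and the identity $p=q$ holds in $\mathbb{Z}$ under $f_k$ precisely when $\phi_k(a_x)=\phi_k(b_x)$ for every variable $x\in X$. Writing $[p]-[q]=\sum_{x\in X}(a_x-b_x)x$, this says that $p=q\in\Sigma_K$ iff $a_x-b_x\in\bigcap_{k\in K}\ker(\phi_k)$ for every $x$, so the task reduces to identifying that intersection in each of the three cases.

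With $|K|=3$, the matrix whose rows list $(\phi_k(1),\phi_k(\alpha),\phi_k(\beta),\phi_k(\gamma))$ for $k\in K$ has rank $3$ over $\mathbb{Q}$, so the common kernel is a rank-one subgroup of $\mathbb{Z}[\mathbf{K}\mathbf{L}]\cong\mathbb{Z}^4$. The next step is to write down the candidate generator in each case---respectively $\alpha+\beta-\gamma-1$, $\alpha-\beta+\gamma-1$, and $\alpha+\beta+\gamma+1$---verify by direct evaluation that $\phi_k$ annihilates it for every $k\in K$, and observe that its four coordinates in the basis $\{1,\alpha,\beta,\gamma\}$ are coprime. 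Coprimality makes the element primitive in $\mathbb{Z}^4$, so it generates the whole rank-one kernel. The ``if'' direction of each equivalence then follows from the annihilation check and the ``only if'' direction from the primitivity statement.

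The one spot requiring a moment of care is part (iii), since $\Sigma_{2,3,4}$ is not balanced: a variable may occur in $p$ but not in $q$, or conversely. The coefficient-by-coefficient reduction still applies, because $[p]-[q]$ is defined termwise in $\mathbb{N}[\mathbf{K}\mathbf{L}]$ and treats an absent variable as having coefficient $0$; moreover the generator $\alpha+\beta+\gamma+1$ satisfies $\phi_1(\alpha+\beta+\gamma+1)=4\neq 0$, which is exactly what correctly excludes $f_1$ from $K$. Apart from this bookkeeping there is no conceptual obstacle; the mention of \cite{KP85} presumably supplies the dictionary between groupoid identities and the polynomial expression $[p]$, and the remainder is a small finite linear-algebra exercise.
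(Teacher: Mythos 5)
Your proposal is correct, but it reaches the conclusion by a different route than the paper. The paper leans on Table~2 of \cite{KP85}, which already records that $p=q$ lies in $\Sigma_{1,2,3}$ iff each coefficient of $[p]-[q]$ lies in the \emph{ideal} of $\mathbb{Z}[\mathbf{K}\mathbf{L}]$ generated by $\alpha+\beta-\gamma-1$; the entire content of the paper's proof is then the one-line computation $(a\alpha+b\beta+c\gamma+d)(\alpha+\beta-\gamma-1)=(-a-b+c+d)(\alpha+\beta-\gamma-1)$, showing that this ideal collapses to the integer multiples of the generator, which is exactly the form asserted in the lemma. You instead bypass the citation: you observe that each $f_k$ induces a character $\phi_k$ of $\mathbf{K}\mathbf{L}$ into $\{\pm1\}$ (hence a ring homomorphism $\mathbb{Z}[\mathbf{K}\mathbf{L}]\to\mathbb{Z}$), that $p=q$ holds under $f_k$ iff $\phi_k(a_x)=\phi_k(b_x)$ for every $x$, and then you compute $\bigcap_{k\in K}\ker\phi_k$ directly as a rank-one saturated subgroup of $\mathbb{Z}^4$ generated by the primitive vector named in each clause. (The saturation/primitivity step is sound --- the integer kernel of a map into a torsion-free group is saturated, and a direct elimination on the three character equations confirms the generator in each case.) What your approach buys is self-containment and symmetry: the three cases become one uniform character computation, and the non-balanced case (iii) is handled transparently since an absent variable simply has coefficient $0$. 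What the paper's approach buys is brevity and consistency with its source: \cite{KP85} states the result with the ideal-theoretic normal form, so only the collapse of the ideal to $\mathbb{Z}(\alpha+\beta-\gamma-1)$ needs to be supplied. Either argument is complete; yours would stand on its own without \cite{KP85}, provided you spell out the (easy) induction showing that the coefficient of $x$ in the $f_k$-evaluation of a term is $\phi_k(a_x)$.
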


\begin{proof}
We require some results from Table 2 of  \cite{KP85}.
If $p=q$ is in $\Sigma_{1,2,3}$, then from that table,
$[p]-[q]=\sum(r_x(\alpha+\beta-\gamma-1)x\mid x\in X)$, where each $r_x$ is in $\mathbb{Z}[\mathbf{K}\mathbf{L}]$.
Since
$(a\alpha+b\beta+c\gamma+d)(\alpha+\beta-\gamma-1)=(-a-b+c+d)(\alpha+\beta-\gamma-1)$, condition (i)
follows.  The argument is similar for the other two cases.
\end{proof}

 Later, we shall apply the following immediate consequence
of Lemma 2.1.

\begin{lemma}
For each of $\Sigma_{1,2,3}$ and  $\Sigma_{1,2,4}$,  a basis consists
of the identities $p=q$ that satisfy the following conditions.  The
symbol $x$  denotes any variable that occurs in $p$ or $q$.  If $x$ occurs
exactly once in both $p$ and $q$, then the color of $x$ is the same in $p$
and $q$.  Whenever $x$  does not occur exactly once in $p$  and $q$,
then $x$ occurs exactly  twice in both $p$ and  $q$.  Moreover, when $x$
occurs twice, then:
\begin{enumerate}[\rm(i)]
\item for $\Sigma_{1,2,3}$, it occurs with colors $\alpha$ and $\beta$ in one term and with
colors $\gamma$ and $1$ in the other;
\item for $\Sigma_{1,2,4}$, it occurs with colors $\alpha$ and $\gamma$ in one term and with
colors $\beta$ and $1$ in the other.
\end{enumerate}
\end{lemma}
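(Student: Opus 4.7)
The plan is to prove both directions of the basis claim, leveraging Lemma 2.1 throughout.

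For the forward direction, I would verify that every identity $p=q$ of the stated form lies in $\Sigma_{1,2,3}$ (or $\Sigma_{1,2,4}$) by direct computation of $[p]-[q]$: single-occurrence variables with matching colors contribute zero, while each twice-occurring variable with the prescribed color pattern contributes exactly $\pm(\alpha+\beta-\gamma-1)x$ in the $\Sigma_{1,2,3}$ case and $\pm(\alpha-\beta+\gamma-1)x$ in the $\Sigma_{1,2,4}$ case. Summing over all variables yields an expression of the form guaranteed by Lemma 2.1, so $p=q$ belongs to the corresponding set.

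For the reverse direction, given $p=q \in \Sigma_{1,2,3}$, I would construct a derivation from identities of the stated form by induction on a complexity measure that decreases as ``excess'' variable occurrences are eliminated---for instance, the sum over all variables of $(\text{number of occurrences in } p)-1$, plus a penalty for variables occurring twice per side with matching color multisets. In the base case, every variable either is linear or fits the twice-per-side pattern, so $p=q$ is already a basis identity. For the inductive step, I would locate a pair of occurrences of some variable $x$---either two within one side or one in each side---that share a color, substitute a fresh variable $x'$ for them, and check that the resulting ``split'' identity $p'=q'$ still lies in $\Sigma_{1,2,3}$ and has strictly smaller complexity. By induction it is derivable from basis identities, and $p=q$ is a substitution instance (via $x' \mapsto x$), so the original identity follows.

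The main obstacle is establishing that a shared-color pair always exists whenever some variable $x$ either occurs more than twice per side, or occurs twice per side with $n_x = 0$. The constraint $[p]_x - [q]_x = n_x(\alpha+\beta-\gamma-1)$ from Lemma 2.1, combined with the rigid shape of $\alpha+\beta-\gamma-1$ (support on all four colors with coefficients $\pm 1$), should force the needed color overlap, but the bookkeeping---distinguishing subcases by $n_x$, by the total occurrence count, and by whether the overlap is within one side or across sides---is delicate and will form the bulk of the proof. A cleaner alternative, if the case analysis becomes unwieldy, is to first use the linear identities from $\mathcal{M}$ (which are automatically basis identities) to bring $p$ and $q$ into a canonical tree shape where the two target occurrences can be interchanged into prescribed positions, reducing the problem to a single elementary substitution step.
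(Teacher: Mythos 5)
Your approach is exactly the one the paper intends: the paper offers no proof at all, presenting the lemma as an ``immediate consequence'' of Lemma 2.1, and the intended argument is precisely your split-into-fresh-variables reduction together with the membership check via Lemma 2.1. The only substantive comment is that the step you flag as delicate is in fact immediate, so the induction on a complexity measure and the case analysis can be dispensed with. Since $1\in K$, the identity $p=q$ is balanced; writing $a_c$ (resp.\ $b_c$) for the number of occurrences of $x$ with color $c$ in $p$ (resp.\ in $q$), the statement that the coefficient of $x$ in $[p]-[q]$ equals $n_x(\alpha+\beta-\gamma-1)$ reads off coefficientwise as $a_\alpha-b_\alpha=a_\beta-b_\beta=n_x$ and $a_\gamma-b_\gamma=a_1-b_1=-n_x$. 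Hence, after pairing $\min(a_c,b_c)$ occurrences of each color $c$ in $p$ with same-colored occurrences in $q$, what remains is exactly $|n_x|$ quadruples of the prescribed shape ($\alpha$ and $\beta$ on one side, $\gamma$ and $1$ on the other); assigning a fresh variable to each pair and to each quadruple produces in one pass a general identity, which lies in $\Sigma_{1,2,3}$ by your forward computation, and of which $p=q$ is a substitution instance. The $\Sigma_{1,2,4}$ case is identical with the roles of $\beta$ and $\gamma$ interchanged.
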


 Let us call the identities described in Lemma 2.2
\emph{general identities}.  Thus, $\Sigma_{1,2,3}$ and  $\Sigma_{1,2,4}$ each has a basis consisting of
general identities (where the meaning of ``general'' depends on
the context).  Any identity in each of these two sets can be
obtained by identifying the variables in some general identity.

\section{Independent finite basis for the operations  $x-y$  and $-x-y$}

 Let  $\mathcal{B}_{2,4}=\mathcal{M}\cup\{\mspace{1mu}x(xy)=y\mspace{1mu}\}$.  For  $\Sigma_{2,4}$,  we first show that $\mathcal{B}_{2,4}$
is a basis and we then find an independent basis.  We require the
following result, which is similar to Lemma 2.1.

\begin{lemma}
For terms $p$ and $q$,  $p=q$ is in $\Sigma_{2,4}$ iff
$[p]-[q]=\sum((m_x(\alpha+\gamma)+n_x(\beta+1))x\mid x\in X)$ for integers $m_x$ and  $n_x$.
\end{lemma}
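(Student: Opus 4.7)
My plan is to proceed by a direct calculation in $\mathbb{Z}[\mathbf{K}\mathbf{L}]$, in the same spirit as Lemma 2.1 but without needing to invoke \cite{KP85}, since the two relevant substitutions are simple enough to analyze by hand.

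First I would translate membership in $\Sigma_{2,4}$ into a coefficient condition. The operation $f_2$ corresponds to $(\alpha,\beta)=(1,-1)$, so $\gamma=\alpha\beta=-1$; the operation $f_4$ corresponds to $(\alpha,\beta)=(-1,-1)$, so $\gamma=\alpha\beta=1$. Writing $[p]-[q]=\sum_{x\in X}d_x\,x$ with $d_x\in\mathbb{Z}[\mathbf{K}\mathbf{L}]$, the identity $p=q$ holds in $\mathbb{Z}$ under $f_k$ (for each integer assignment of the variables) exactly when $d_x$ evaluates to $0$ under the corresponding ring homomorphism $\mathbb{Z}[\mathbf{K}\mathbf{L}]\to\mathbb{Z}$. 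Thus $p=q$ is in $\Sigma_{2,4}$ iff every $d_x$ lies in the intersection of the two kernels.

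Next I would solve for this intersection explicitly. Writing $d_x=a\alpha+b\beta+c\gamma+e$, the two vanishing conditions are
\begin{equation*}
a-b-c+e=0\quad\text{and}\quad -a-b+c+e=0.
\end{equation*}
Adding and subtracting gives $a=c$ and $b=e$, so
\begin{equation*}
d_x=a(\alpha+\gamma)+b(\beta+1),
\end{equation*}
which is the required form with $m_x=a$ and $n_x=b$. The converse is trivial: under either substitution, $\alpha+\gamma$ and $\beta+1$ each evaluate to $0$, so every difference of the stated form lies in both kernels.

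There is essentially no obstacle here; the only thing to be careful about is that, unlike in Lemma 2.1, identities in $\Sigma_{2,4}$ need not be balanced (the basis of Table 1 already contains the unbalanced $x(xy)=y$), so the bookkeeping is done in $\mathbb{Z}[\mathbf{K}\mathbf{L}]$ rather than $\mathbb{N}[\mathbf{K}\mathbf{L}]$, and one does not restrict to variables appearing on both sides. The direct two-equation computation automatically handles this because $d_x$ may be any element of $\mathbb{Z}[\mathbf{K}\mathbf{L}]$.
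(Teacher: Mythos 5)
Your proof is correct, and it takes a genuinely different (and more self-contained) route than the paper. The paper's proof first quotes Table~2 of \cite{KP85} to get that $p=q$ is in $\Sigma_{2,4}$ iff each coefficient of $[p]-[q]$ lies in the principal ideal of $\mathbb{Z}[\mathbf{K}\mathbf{L}]$ generated by $\beta+1$, and then expands $(a\alpha+b\beta+c\gamma+d)(\beta+1)=(a+c)(\alpha+\gamma)+(b+d)(\beta+1)$ to identify that ideal with the stated set. You instead go back to the definition: $f_2$ and $f_4$ induce the two ring homomorphisms $\mathbb{Z}[\mathbf{K}\mathbf{L}]\to\mathbb{Z}$ determined by the characters $(\alpha,\beta)\mapsto(1,-1)$ and $(\alpha,\beta)\mapsto(-1,-1)$, membership in $\Sigma_{2,4}$ is equivalent to every coefficient of $[p]-[q]$ lying in the intersection of the two kernels, and solving the two linear equations $a-b-c+e=0$, $-a-b+c+e=0$ gives $a=c$, $b=e$, i.e.\ exactly $m(\alpha+\gamma)+n(\beta+1)$. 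This avoids the external citation entirely and, as a byproduct, verifies that the intersection of the two kernels coincides with the ideal $(\beta+1)$ used in \cite{KP85}; the paper's route, on the other hand, keeps Lemma 3.1 parallel in form to Lemma 2.1, where all cases are read off from the same table. Your closing remark about $\Sigma_{2,4}$ not being balanced is also on point: it explains why the coefficients must be taken in $\mathbb{Z}[\mathbf{K}\mathbf{L}]$ rather than $\mathbb{N}[\mathbf{K}\mathbf{L}]$, and your kernel computation needs no balancedness hypothesis.
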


\begin{proof}
By Table 2 of  \cite{KP85},  $p=q$ is in $\Sigma_{2,4}$  iff
$[p]-[q]=\sum(r_x(\beta+1)x\mid x\in X)$, where each $r_x$ is in $\mathbb{Z}[\mathbf{K}\mathbf{L}]$.  Since
$(a\alpha+b\beta+c\gamma+d)(\beta+1)=(a+c)(\alpha+\gamma)+(b+d)(\beta+1)$, the result follows.
\end{proof}

\begin{theorem}
The set  $\mathcal{B}_{2,4}$ is a basis for  $\Sigma_{2,4}$.
\end{theorem}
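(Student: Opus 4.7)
The plan is to reduce identities in $\Sigma_{2,4}$ to identities in $\Sigma$, which then follow from $\mathcal{M}\subseteq\mathcal{B}_{2,4}$ via Theorem 1.1. The key computation is $[x(xy)]-[y]=(\alpha+\gamma)x$: applying the new identity as a subterm replacement in a context whose root has color $g$ changes the polynomial by $\pm g(\alpha+\gamma)\cdot x$, where $x$ may be any (old or fresh) variable. As $g$ ranges over $\mathbf{K}\mathbf{L}$, the achievable shifts per variable include $(\alpha+\gamma)$ (at a color-$1$ position) and $\alpha(\alpha+\gamma)=\beta+1$ (at a color-$\alpha$ position), which are exactly the two kinds of shifts appearing in Lemma 3.1.

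Given $p=q$ in $\Sigma_{2,4}$, Lemma 3.1 produces integers $m_x,n_x$ with $[p]-[q]=\sum_{x}(m_x(\alpha+\gamma)+n_x(\beta+1))x$. I would then pad $p$ and $q$ with instances of $y=x(xy)$ (substituting an existing variable for the outer $x$ when desired): for each variable $x$ with $m_x>0$ I add $m_x$ copies of ``$x(x\cdot)$'' at a color-$1$ position of $q$, and for $m_x<0$ I add $|m_x|$ such copies on the $p$-side; the coefficients $n_x$ are handled analogously at a color-$\alpha$ position. The result is terms $p^*$ and $q^*$, derivable from $p$ and $q$ respectively via $\mathcal{B}_{2,4}$, such that $[p^*]=[q^*]$. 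Then $p^*=q^*$ lies in $\Sigma$, and by Theorem 1.1 it is derivable from $\mathcal{M}\subseteq\mathcal{B}_{2,4}$, yielding the complete derivation $p\to p^*=q^*\to q$.

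The main technical obstacle is ensuring positions of the required color remain available throughout the padding. The root of any term supplies a color-$1$ position, and a color-$\alpha$ position appears as the left child of any internal vertex, so the only real edge case is when a side is a single variable; a single preliminary color-$1$ insertion then creates the required structure. Since every padding step strictly enlarges the tree without destroying existing colored vertices, the bookkeeping is straightforward once one fixes an ordering (for example, doing all padding first and all $\mathcal{M}$-rearrangement afterwards).
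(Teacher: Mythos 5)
Your proof is correct and follows the same overall strategy as the paper's: compute the defect $[p]-[q]$ via Lemma 3.1, eliminate it by padding with the unbalanced identity $x(xy)=y$, and finish with Theorem 1.1. Your padding mechanism, however, is cleaner. The paper first collapses every variable occurring with paired colors $\{\mspace{1mu}\alpha,\gamma\mspace{1mu}\}$ or $\{\mspace{1mu}\beta,1\mspace{1mu}\}$ onto a fixed variable $t$ (which requires deriving $(x(yz))x=zy$ and $(xy)x=(zy)z$ from (M4) and cancellation), and then realizes the two shifts $(\alpha+\gamma)t$ and $(\beta+1)t$ by two distinct operations, $r\mapsto t(tr)$ and $uv\mapsto(t(vu))t$. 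You instead observe that one local application of $y=x(xy)$ at a position of color $g$ shifts the value by $g(\alpha+\gamma)x$, so the root gives the $(\alpha+\gamma)$ shift and the root's left child gives the $(\beta+1)$ shift; this treats each variable separately and makes the auxiliary identities and the collapsing step unnecessary. Two small points to tidy. First, the left child of an arbitrary internal vertex of color $g$ has color $\alpha g$, not $\alpha$; but since colors $\alpha$ and $\gamma$ both yield the $(\beta+1)$ shift and the root's left child always has color $\alpha$, the position you actually need is always available once the side is not a single variable. Second, the ``preliminary insertion'' you use when a side is a single variable changes that side's value by $(\alpha+\gamma)x$, so it must either be performed on both sides simultaneously (as the paper does with $t(tp)$ and $t(tq)$) or be absorbed into the count $m_x$; either fix is immediate since the $m_x$ are arbitrary integers.
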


\begin{proof}
Since  $\mathcal{M}$  is a basis for $\Sigma$  by Theorem 1.1, we can
calculate modulo $\Sigma$.  Let  $p=q$ be in $\Sigma_{2,4}$.  Let  $t$  be a fixed
variable. If a variable $x$ occurs in $p$ with colors $\alpha$  and $\gamma$, then
form $p'$ by replacing these two occurrences of  $x$  with $t$.  Since
$t(tp)=x(xp')$ is in $\Sigma$,  it follows that    $p=p'$.  Using (M4),
$(x(yz))x=(x(xz))y=zy$.   Consequently,  $(xy)x=(y(x(xy)))y=y^2y$.
Therefore,  $(xy)x=(zy)z$.   If $x$ has colors $\beta$  and 1 in $p$ and we
form $p'$ by replacing these two occurrences of $x$ with $t$, then
$p=p'$ because $p\equiv p_1p_2=(t(p_2p_1))t=(x(p'_2p'_1))x=p'$.  Make all
possible such variable replacements in both  $p$  and  $q$.

 By Lemma 3.1,  $[p]-[q]=(m(\alpha+\gamma)+n(\beta+1))t$ \ for integers $m$
and  $n$.
Let  $r$  be any term.  If we define $r_i$ by  $r_0\equiv r$ and $r_{i+1}\equiv t(tr_i)$, then
$r=r_i$  is a consequence of $\mathcal{B}_{2,4}$.  If $m<0$, replace $p$ by $p_{|m|}$,   and if
$m>0$, replace $q$ by $q_m$.  Thus, we have reduced to the case that
$m=0$.  If either  $p$ or $q$ is a variable, then replace $p$ by $t(tp)$ and
replace  $q$  by $t(tq)$.  We now define $(uv)^*\equiv(t(vu))t$ for terms $u$
and $v$.  We define a new sequence of terms:   $r_0\equiv r$ and $r_{i+1}\equiv(r_i)^*$.
Clearly,  $r=r_i$  is a consequence of $\mathcal{B}_{2,4}$.   If $n<0$, replace $p$ by $p_{|n|}$,
and if $n>0$, replace $q$ by $q_n$.  Since we have reduced to the case
that $n=0$, the transformed identity is in $\Sigma$ and we are done.
\end{proof}

\begin{theorem}
An independent basis for  $\Sigma_{2,4}$  consists of
\textup{(M2)} and $x(xy)=y$.
\end{theorem}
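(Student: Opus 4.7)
The claim has two parts: $\{(\textup{M2}),\, x(xy)=y\}$ is a basis for $\Sigma_{2,4}$, and this basis is independent.

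For the basis property I would invoke Theorem~3.2, which already gives $\mathcal{B}_{2,4} = \mathcal{M}\cup\{x(xy)=y\}$ as a basis; it therefore suffices to derive the five remaining mutation laws \textup{(M1), (M3), (M4), (M5), (M6)} from $\{(\textup{M2}),\, x(xy)=y\}$. The central trick is an \emph{expand--swap--contract} move: rewrite a leaf $v$ as $u(uv)$ using $x(xy)=y$ in reverse, apply \textup{(M2)} to interchange the desired variables in the resulting $(AB)(CD)$ subterm, and then contract another $x(xy)$-pattern back to a leaf. For \textup{(M3)} this is a single pass with $u=x$:
\[
((xy)z)t \;=\; ((xy)z)(x(xt)) \;\stackrel{(\mathrm{M2})}{=}\; ((xt)z)(x(xy)) \;=\; ((xt)z)y.
\]

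The other laws follow the same blueprint, sometimes with expansions on both operands. Expanding $t = x(xt)$ in $(x(yz))t$ and $y = x(xy)$ in $(x(tz))y$, then applying \textup{(M2)}, yields $((xt)(yz))(xx)$ and $((xy)(tz))(xx)$ respectively, so \textup{(M4)} reduces to an \textup{(M1)}-type swap on the first factor. Because \textup{(M1)} and \textup{(M4)} become interreducible in this way, the substantive task is to derive one of them directly. I would attack this with the help of the consequence $(xy)x = (ty)t$ of $\{(\textup{M2}),\, x(xy)=y\}$, obtained as follows: since $x(xy)=y$ makes left multiplication by $xy$ an involution, applying it to both sides of \textup{(M2)} gives $(xy)((ty)(zx)) = zt$; specializing $z=xy$ and contracting collapses this to $(ty)((xy)x) = t$, and one more application of the same involution (now with $ty$) yields $(xy)x = (ty)t$. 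For \textup{(M5)} and \textup{(M6)}, where the outer slot is a variable rather than a product, the expansion must take place on that outer slot; naive choices yield a stubborn term such as $(yz)z$ with no available right-cancellation, so one must expand using a fresh variable and couple a second \textup{(M2)} step with a reducing contraction. Coordinating these expansions across the five laws is what I expect to be the main technical hurdle.

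For independence, two explicit groupoids suffice. The constant operation $xy=0$ on $\{0,1\}$ satisfies \textup{(M2)} trivially but violates $x(xy)=y$ (take $y=1$). The right-projection $xy=y$ on $\{0,1\}$ gives $x(xy)=xy=y$, while $(xy)(zt)=t\neq x=(ty)(zx)$ in general, so \textup{(M2)} fails. Hence neither identity is a consequence of the other.
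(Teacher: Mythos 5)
Your independence argument is correct (the constant groupoid and the right projection do the job; the paper uses the observation that every consequence of (M2) is balanced in place of your constant groupoid, and the same second-projection model), and your expand--swap--contract derivation of (M3) is valid --- indeed shorter than the paper's, which only reaches (M3) after first establishing (M1). Your auxiliary identity $(xy)x=(ty)t$ is also correctly obtained and is essentially the paper's opening move. (Incidentally, the reduction to deriving the five remaining mutation laws rests on Theorem~3.1, not Theorem~3.2, which is the statement being proved.)

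However, there is a genuine gap: (M1), (M5) and (M6) are never derived. You reduce (M4) to (M1) and then say the ``substantive task is to derive one of them directly,'' and for (M5) and (M6) you explicitly defer the construction as ``the main technical hurdle.'' That hurdle is the content of the theorem. Deriving (M1) from $(xy)x=(ty)t$ alone is not routine: the paper needs the further intermediate identity $x^2y^2=y^2x^2$, proved by the chain $y^2(x^2y^2)=y^2(yx)^2=((yx)y)^2=(x^2x)^2=x^2(x^2x^2)=x^2$ together with $y^2(y^2x^2)=x^2$ and left cancellation, and then gets (M1) by showing $(xy)((xz)(yt))=zt=(xy)((xy)(zt))$ and cancelling on the left. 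Only with (M1) in hand does it dispatch (M5) and (M6), via chains such as $x((yz)t)=(xx^2)((yz)t)=(tx^2)((yz)x)=\dots=z((yx)t)$. As written, your proposal establishes (M3), reduces (M4) to (M1), and leaves the derivations of (M1), (M5) and (M6) --- the hard part --- as an acknowledged plan rather than a proof.
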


\begin{proof}
Any consequence of (M2) is balanced.  Moreover,
the second projection satisfies $x(xy)=y$, but fails (M2) in a 
$2$-element set.  Thus, the two identities are independent.

 Assume both (M2) and $x(xy)=y$.  By Theorem 3.1, it
suffices to derive the five remaining identities of  $\mathcal{M}$.  The second
identity allows us to cancel on the left.  By (M2),
$(xy)((zy)z)=(zy)((zy)x)=x$.  Thus, $(xy)((xy)x)=(xy)((zy)z)$ and
we can cancel on the left to conclude that $(xy)x=(zy)z$.
Calculating,
$y^2(x^2y^2)=y^2(yx)^2=((yx)y)^2=(x^2x)^2=x^2(x^2x^2)=x^2$.  Since
$y^2(y^2x^2)=x^2$, the identity  $x^2y^2=y^2x^2$  follows by left cancellation.
Calculating,
$(xy)((xz)(yt))=((yt)y)((xz)x)=(t^2t)(z^2z)=(zt)(z^2t^2)=(zt)(t^2z^2)=(zt)(zt)^2=zt$.
Since $(xy)((xy)(zt))=zt$, we conclude, by left cancellation, that
(M1) holds.

 We can now use both (M1) and (M2).  The calculation
$(x(yz))t=(x(yz))(tt^2)=(t^2(yz))(tx)=((ty)(tz))(tx)=(x(tz))(t(ty))=(x(tz))y$ proves (M3). The calculation
$(x(yz))t=(x(yz))(tt^2)=(t^2(yz))(tx)=((ty)(tz))(tx)=(x(tz))(t(ty))=(x(tz))y$  proves (M4).
The calculation
$x((yz)t)=(xx^2)((yz)t)=(tx^2)((yz)x)=(tx^2)((yz)(xx^2))=(tx^2)((yx)(zx^2))=((zx^2)x^2)((yx)t)=((xx^2)(xz))((yx)t)=(x(xz))((yx)t)=z((yx)t)$
proves (M5).  The calculation
$x(y(zt))=(xx^2)(y(zt))=((zt)x^2)(yx)=((xt)(xz))(yx)=(x(xz))(y(xt))=z(y(xt))$ proves (M6).
\end{proof}

\section{Two results about terms}

 We shall apply the results of this section in \S5 and \S6.

 Let $p$  be a term in which no variable occurs more than
once with the same color.  There is an obvious tree $P$  associated
with $p$ (which extends the definition given for linear terms).  The
variable \emph{occurrences}  in $p$  now correspond to the leaves of  $p$.
For example, if  a variable $x$  occurs with colors $\alpha$ and $\beta$ in  $p$,
then the $\alpha$-leaf $x$ and the $\beta$-leaf $x$  are two distinct leaves of $P$.
As before, the subterms of  $p$  uniquely correspond to subtrees of
$P$.

 For a tree  $T$, let $g^\#$ denote the number of its  $g$-leaves,
where $g\in\mathbf{K}\mathbf{L}$.  We set   $\lambda(T)=(\alpha^\#,\beta^\#,\gamma^\#,1^\#)$ and we call  $\lambda(T)$ the
\emph{total color} of $T$.  A $4$-tuple of natural numbers is called
\emph{representable} if it equals   $\lambda(T)$ for some tree $T$. We first
characterize the representable $4$-tuples.  The second result of this
section concerns subterms.

 Let  $m$ and $n$ be nonnegative integers. We define two
functions: $\varphi_1(m,n)=(2m+n-1)/3$  and  $\varphi_2(m,n)=(m+2n-2)/3$.
Each function returns an integer when  $m$ and  $n$ satisfy
$2m+n\equiv1 \pmod{3}$.  We also define these two functions on
$4$-tuples by defining  $\varphi_i(a,b,c,d)$  to be $\varphi_i(a+b,c+d)$.   For a tree  $T$,
we write  $\varphi_i(T)$ for  $\varphi_i(\lambda(T))$.  By the following result,  $\varphi_1(T)$
and $\varphi_2(T)$  are nonnegative integers for any tree $T$.

\begin{theorem}
\begin{enumerate}[\rm(i)]
\item If $\lambda(T)=(a,b,c,d)$ for a tree $T$,  then
$2m+n\equiv1\pmod{3}$, where $m=a+b$ and $n=c+d$.
\item Every tree  $T$ has   $\,\varphi_1(T)$ \ $\alpha$-vertices,  $\,\varphi_1(T)$ \ $\beta$-vertices,  $\varphi_2(T)$ \ 
$\gamma$-vertices and \\
$(\varphi_2(T)+1)$ \ $1$-vertices.
 \item In any nontrivial tree $T$,  $\alpha^\#\le\varphi_1(T)$,  $\beta^\#\le\varphi_1(T)$,  $\gamma^\#\le\varphi_2(T)$
and  \\ $1^\#\le\varphi_2(T)$.
\item A $4$-tuple $(a,b,c,d)$ of nonnegative integers different than
$(0,0,0,1)$ is representable iff  $2a+2b+c+d\equiv1\pmod{3}$,
$a\le\varphi_1(a,b,c,d)$,  $b\le\varphi_1(a,b,c,d)$,  $c\le\varphi_2(a,b,c,d)$  and  $d\le\varphi_2(a,b,c,d)$.
\end{enumerate}
\end{theorem}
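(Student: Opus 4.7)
I plan to prove parts (i)--(iii) together by induction on the number of leaves $m+n$, and then derive (iv) from them: necessity is immediate, while sufficiency requires a further inductive construction. For a nontrivial tree, write $T = T_1 T_2$ and view each $T_i$ as a standalone tree rooted at color $1$. In $\mathbf{K}\mathbf{L}$, multiplication by $\alpha$ induces the permutation $(\alpha\;1)(\beta\;\gamma)$ of colors and multiplication by $\beta$ induces $(\alpha\;\gamma)(\beta\;1)$. Applying these shifts to the leaf colors of $T_1, T_2$ and summing yields $\lambda(T) = (d_1 + c_2,\, c_1 + d_2,\, b_1 + a_2,\, a_1 + b_2)$ where $\lambda(T_i) = (a_i, b_i, c_i, d_i)$, and in particular $m(T) = n(T_1) + n(T_2)$ and $n(T) = m(T_1) + m(T_2)$. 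For (i), the inductive hypothesis $2m_i + n_i \equiv 1 \pmod 3$ doubles to $m_i + 2n_i \equiv 2 \pmod 3$; summing these gives $2m(T) + n(T) \equiv 1 \pmod 3$. Part (ii) repeats this induction but tracks all vertex-color counts, with one extra $1$-vertex contributed by the new root; the identities $\varphi_1(T) = \varphi_2(T_1) + \varphi_2(T_2) + 1$ and $\varphi_2(T) = \varphi_1(T_1) + \varphi_1(T_2)$ are direct algebra from the $\varphi$-formulas. Part (iii) is immediate from (ii): leaves are among vertices, and in any nontrivial tree the root is a $1$-colored internal vertex, accounting for the sharp bound $1^\# \le \varphi_2(T)$.

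For (iv), necessity combines (i), (iii), and the observation that the trivial tree is the unique realization of $(0,0,0,1)$. For sufficiency, I would induct on $m+n$. The base cases $m+n \le 3$ are checked by hand: the only admissible tuples are $(1,1,0,0)$ for two leaves and $(1,0,1,1), (0,1,1,1)$ for three, each realized by the corresponding small binary tree. For the inductive step, given admissible $(a,b,c,d)$ with $m+n \ge 4$, the task is to find nonnegative quadruples $(a_i, b_i, c_i, d_i)$, $i=1,2$, each either equal to $(0,0,0,1)$ or admissible with $m_i + n_i < m + n$, satisfying $(a,b,c,d) = (d_1 + c_2,\, c_1 + d_2,\, b_1 + a_2,\, a_1 + b_2)$; the inductive hypothesis then supplies subtrees $T_1, T_2$, and $T = T_1 T_2$ realizes the target.

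The main obstacle is proving that such a split always exists. The mod-$3$ conditions on the two halves are automatic once sizes $(m_i, n_i)$ are chosen with compatible residues, and the overall congruence $2m + n \equiv 1 \pmod 3$ always permits this. The delicate requirement is the admissibility inequalities on each half. My plan is to try the simplest split first, peeling off a single leaf by taking one subtree to be $(0,0,0,1)$: for instance, $(a_1, b_1, c_1, d_1) = (0,0,0,1)$ forces $(a_2, b_2, c_2, d_2) = (c, d, a-1, b)$, and substituting into the $\varphi$-formulas reduces admissibility of the smaller tuple to inequalities that follow from the original bounds on $(a, b, c, d)$---except at a few extremal configurations, such as $b = \varphi_1(T)$. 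Those configurations pin $(m, n)$ to a very small range by elementary computation with the $\varphi$-formulas (in fact $a = b = \varphi_1(T)$ forces $m + 2n \le 2$, ruling it out for $m+n \ge 3$), so they reduce to base cases or to peeling off a leaf of a different color. A finite case analysis on which of $a, b, c, d$ saturates its bound then closes the proof.
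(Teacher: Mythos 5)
Your treatment of (i)--(iii) is correct and takes a genuinely different route from the paper: you induct on the root decomposition $T=T_1T_2$, using the color permutations induced by multiplication by $\alpha$ and $\beta$ and the identities $\varphi_1(T)=\varphi_2(T_1)+\varphi_2(T_2)+1$, $\varphi_2(T)=\varphi_1(T_1)+\varphi_1(T_2)$ (all of which check out), whereas the paper inducts by deleting a deepest pair of sibling leaves, which lets it prove (i) and (ii) in one short step without tracking the cross-over of colors between the two subtrees. Necessity in (iv) is fine in both treatments.

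The sufficiency direction of (iv) is where your plan has a genuine gap. Peeling off a single leaf at the root requires the representing tree to have the form $x\cdot T_2$ or $T_1\cdot y$, which forces $a\ge1$ or $b\ge1$ respectively; so the entire infinite family of admissible tuples with $a=b=0$ (for example $(0,0,2,2)$, realized only by trees like $(xy)(zt)$, and more generally $(0,0,c,d)$ with $c+d\equiv1\pmod 3$ and $c,d\le\varphi_2$) is unreachable by either peel. These tuples are not among your ``extremal configurations'': the obstruction is not saturation of an upper bound but vanishing of $a$ and $b$, and it does not pin $(m,n)$ to a small range. Handling them within your framework forces a split into two \emph{nontrivial} subtrees, whose tuples must themselves have $c_i=d_i=0$, so you would need a further analysis of which pairs $(a',b',0,0)$ are admissible and how to partition $c$ and $d$ as $b_1+a_2$ and $a_1+b_2$ subject to those constraints --- a nontrivial extra layer you have not supplied. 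The paper avoids this entirely by reducing in the opposite direction: after normalizing $a\le b$ and $c\le d$ by symmetry, it shows that not both $a$ and $c$ can be zero, and then replaces $(a,b,c,d)$ by $(a+1,b,c-1,d-1)$ or $(a-1,b-1,c+1,d)$ (growing a rank-$2$ cherry at an $\alpha$-leaf or a $\gamma$-leaf of the smaller representing tree), a move that is always available away from $(1,1,0,0)$. Either adopt such a leaf-expansion reduction or add the two-nontrivial-subtree case to your split analysis.
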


\begin{proof}
We prove (i) and (ii) simultaneously by induction
on the rank of $T$.  Both statements hold for the trivial tree which
has the total color $(0,0,0,1)$.  We can now assume that  $T$  is
nontrivial.  Consider a pair of sibling leaves at the maximum
depth in $T$  and assume the result for the tree $S$  obtained by
removing these two leaves.  Let  $\lambda(S)=(a',b',c',d')$.  If the
maximum depth is odd, then $a=a'+1$, $b=b'+1$ and  $c+d=c'+d'-1$.
Thus, (i) holds for $T$,  $\varphi_1(T)=\varphi_1(S)+1$ and $\varphi_2(T)=\varphi_2(S)$.  Since $T$
has one more $\alpha$-vertex and one more $\beta$-vertex than $S$, condition
(ii) holds for $T$.  The proof for even maximum depth is similar.

 Condition (iii) follows immediately from (ii).  Moreover, the
necessity in condition (iv) follows from (i) and (iii).   Let $(a,b,c,d)$
be a $4$-tuple of nonnegative integers different than $(0,0,0,1)$ that
satisfies the five parts of condition (iv).  Let $m=a+b$ and $n=c+d$.
Thus, $2m+n\equiv1 \pmod{3}$.  Observe that
$\varphi_1(m,n)+\varphi_2(m,n)=m+n-1$.  By induction on  $m+n$, we show that
there is a tree  $T$  with  $\lambda(T)=(a,b,c,d)$.  If there is a nontrivial
tree with total color $(a,b,c,d)$, then there are also trees with total
colors $(b,a,c,d)$, $(b,a,d,c)$ and $(a,b,d,c)$.  (Apply the dual in the
first case and replace the tree for  $pq$ by the tree for  $\emph{qp}$ in the
second.)  Therefore, we can assume that $a\le b$ and $c\le d$.  We shall
write $\varphi_i$  for $\varphi_i(m,n)$.

 If $a$ and $c$ were both zero, then $m+n=b+d\le\varphi_1+\varphi_2=m+n-1$, a
contradiction.  We first assume that $c>0$.  If $a=\varphi_1$, then $a=b$  and
$3b=4b+c+d-1$, which is impossible because $c+d\ge2$.  Therefore,
$a<\varphi_1$.  Since $(a+1,b,c-1,d-1)$ satisfies (iv), we are done by
induction.  (In the representing tree, replace an $\alpha$-leaf by the tree
of rank 2.)  We can now assume that $a>0$.  If $c=\varphi_2$, then $c=d$
and  $3d=a+b+4d-2$, implying that $(a,b,c,d)$ equals $(1,1,0,0)$, the
total color of the tree of rank 2.  Thus, we can assume that $c<\varphi_2$.
Since  $(a-1,b-1,c+1,d)$ satisfies (iv), it is representable.  Replace
a $\gamma$-leaf to complete the proof.
\end{proof}

\begin{theorem}
If a linear term contains variables of all
four colors, then modulo $\Sigma$, the term has  a subterm of the form
$((xy)v)(zt)$ or $(u(yx))(zt)$,  where $x$, $y$, $z$, $t$, $u$ are variables and $v$ is
a term.
\end{theorem}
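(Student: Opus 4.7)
I plan to proceed by induction on the rank of the linear term. The hypothesis forces the rank to be at least $5$: by Theorem 4.1(iv), the tuple $(1,1,1,1)$ is not representable since $2m+n = 6 \not\equiv 1 \pmod{3}$, and at rank $5$ the only representable total colors with all four entries positive are $(1,1,2,1)$ and $(1,1,1,2)$. For the base case, I will enumerate the rank-$5$ trees with these two total colors---a short list, further reduced by the self-duality of $\Sigma$---and verify the conclusion directly. The principal tool is the double rule from the introduction, which lets me swap an incomparable same-color leaf with a same-color subtree; the availability of leaves of every color ensures that a suitable swap partner is always on hand.

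In the inductive step, write $T = LR$ with rank at least $6$. Because multiplication by $\alpha$ (respectively $\beta$) in the Klein $4$-group is a bijection on colors, $L$ (respectively $R$) as a subtree of $T$ contains all four colors if and only if it does as its own tree. If either $L$ or $R$ contains all four colors, the induction hypothesis applied to that side yields a subterm of the required form, which is inherited by $T$. Otherwise both $L$ and $R$ are nontrivial and each omits at least one color, but together they cover all four. Here I will use the double rule to perform two successive swaps: first placing a cherry as the right child of some vertex, and then placing a second cherry as a left grandchild of that same vertex, producing the desired shape $((xy)v)(zt)$ or $(u(yx))(zt)$.

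I expect the main obstacle to be the case analysis in the inductive step when neither side of the root contains all four colors. The second swap must leave the first cherry intact while still succeeding, and the incomparability requirements of the double rule must be met simultaneously. I will rely on the guaranteed presence of leaves of all four colors together with the counting identities in Theorem 4.1(ii), which determine how many vertices of each color the tree must contain, to ensure enough incomparable same-color partners to carry out both swaps. The subcase split is likely to be driven by which color each of $L$ and $R$ omits, so that the position of the first cherry can be chosen to leave room for the second.
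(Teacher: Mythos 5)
Your base case is sound in principle (a finite check of the rank-$5$ trees with total color $(1,1,2,1)$ or $(1,1,1,2)$), though note that the pair of target forms is not literally self-dual: the dual of $((xy)v)(zt)$ is $(tz)(v(yx))$, which has the cherry on the left, so the ``reduction by duality'' needs an extra application of (M1) or (M2) to flip the configuration back into one of the two stated shapes. This is minor and repairable.

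The genuine gap is in the inductive step. In your case (b), where neither $L$ nor $R$ contains all four colors, nothing has been reduced: the tree still has arbitrary rank and you must construct the required subterm from scratch, so this case carries the entire weight of the theorem rather than being a residual case analysis. The proposed mechanism --- two successive applications of the double rule --- presupposes that its hypotheses (a leaf together with two pairwise incomparable vertices of the same color) can always be met, and this fails in exactly the degenerate configurations that make the theorem hard. For example, if $L$ is a cherry, then as its own tree it has no $\gamma$-vertex at all ($\varphi_2(2,0)=0$), so within $T$ it contains vertices of only three colors and no leaf of the missing fourth color can ever be interchanged into it; more generally, if the tree has no internal vertex of some color $c$, there is no $c$-colored subtree of rank at least $2$ available to absorb or donate material at a $c$-position. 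The paper's proof isolates precisely these situations (no internal $\alpha$-vertices, no internal $\gamma$-vertices, no internal $1$-vertices) and handles each with devices beyond the double rule: a recoloring reduction in which, for $p\equiv xq$, the leaf $x$ is interchanged with an $\alpha$-leaf $u$ of $q$ to give $p=ur$, after which the four distinguished variables reappear in the lower-rank term $r$ with permuted colors and the induction hypothesis applies to $r$; descents along $\alpha$- or $\beta$-edges to locate usable leaves; and explicit computations modulo $\Sigma$ such as $(tz)(s(xy))=(ts)(z(xy))=((xy)s)(zt)$ to convert a mirrored configuration into one of the two required forms. None of these ideas appears in your outline, and without them the hard case of your induction does not close.
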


\begin{proof}
Assume that variables $x$, $y$, $z$ and $t$  occur in the
linear term $p$  with colors $\alpha$, $\beta$, $\gamma$ and 1, respectively.  All
calculations with identities are modulo $\Sigma$.  We induct on the rank
of $p$.  We first assume there are no internal vertices of color $\alpha$  in the tree
$P$.  In particular, $p=xq$.  Since $Q$ contains a $\beta$-leaf, it also contains
an $\alpha$-vertex, say $u$.  (All colors are calculated in $P$.)
By our assumption, $u$  is a leaf.  Interchange  $x$ and the variable
$u$ so that $p=ur$.  The variables $x$, $y$, $z$ and $t$ occur in the term $r$
with colors $\gamma$, 1, $\beta$ and $\alpha$, respectively.  Thus, by induction, $r$
has a subterm with one of the two given forms and we are done.
By duality, we can now assume that $P$  has internal vertices of
color $\alpha$ and of color  $\beta$.

 By the double rule, we can assume that  $p\equiv(qr)(zt)$ for
terms $q$ and $r$.   We first assume that $q\equiv q_1q_2$ and $r\equiv r_1r_2$.  Using
(M1) and (M2), we can assume that $x\le r$ and $y\le r$.  Interchange $x$
and $q_1$, and $y$ and $q_2$ to give a term of the first form.

 We now assume that the tree $P$ has no internal $\gamma$-vertices.  In
particular, $r$ is a variable. As before, $q\equiv q_1q_2$.   We are done if
both $q_1$  and $q_2$ are variables.  Firstly, we assume that $q_1$ is not a
variable.  In $P$, descend from $q_1$ by $\alpha$-edges until we reach a leaf
$u$. If $u$ has color 1,  then interchange the parent of $u$ and the leaf
$x$.  (Since there are no internal $\gamma$-vertices, the sibling of $u$ is a
leaf.)  Hence, we can assume that  $u\equiv x$.  Let  $v$  be the sibling of
$x$.  Since  $v$ and $q_2$ both have color $\beta$, we can use the double rule
to replace $v$ with $y$.  Thus, $((xy)s)w$  is now a subterm.
Interchange  $w$  and $zt$ to complete the proof in this case.
Secondly, we can assume that $q_1\equiv x$  and that $q_2$ is not a variable.
Descend from $q_2$ by $\beta$-edges until we reach a leaf  $u$.  Arguing as
before, we can assume that $u\equiv y$.   Interchange $x$ and the sibling
of  $y$  to make  $xy$ a subterm.  Now interchange $q$ and $t$.
Interchange $r$ and $z$ to make  $tz$ a subterm.  Thus, there is a
subterm  $w(s(xy))$.   Interchange $w$  and $tz$  to obtain $(tz)(s(xy))$
as a subterm.  Since $(tz)(s(xy))=(ts)(z(xy))=((xy)s)(zt)$, we are
done in this case.

 Finally, we can assume that the tree $P$ has no internal $1$-vertices.  In
particular, $q$ is a variable.  Similarly as before, $r\equiv r_1r_2$ and we are
done if both $r_1$  and $r_2$ are variables.  Firstly, we assume that  $r_1$
is not a variable and we descend from  $r_1$ by $\alpha$-edges until we
come to a leaf $u$.  As before, we can assume that $u\equiv y$.  Since $r_2$
has color $\alpha$, the double rule allows us to assume that  $yx$  is a
subterm.  Interchange $q$ and $t$ , and also $r$  and $z$.  In particular,
$\emph{tz}$ is now a subterm.  We now have the subterm  $((yx)s)w$.
Interchange $w$  and $tz$  to obtain $((yx)s)(tz)$ as a subterm.
Secondly, we can assume that $r_1\equiv y$  and that $r_2$ is not a variable.
Descend from $r_2$ by $\beta$-edges until we come to a leaf $u$, which as
before, we can assume is  $x$.  Now make $yx$ a subterm by
interchanging $y$  and the sibling of  $x$.  Thus, there is a subterm
$(w(s(yx))$.  Interchange $w$  and $zt$  to obtain $(zt)(s(yx))$ as a
subterm.  Since  $(zt)(s(yx))=(zs)(t(yx))=((yx)s)(tz)$, the proof is
complete.
\end{proof}

\section{Independent finite basis for all operations except $-x-y$}

 Let
$\mathcal{B}_{1,2,3}=\mathcal{M}\cup\{\mspace{1mu}(x^2z)y^2=(y^2z)x^2,\:(zx^2)y^2=(zy^2)x^2\mspace{1mu}\}$.
We shall show that $\mathcal{B}_{1,2,3}$   is a basis for $\Sigma_{1,2,3}$.

\begin{lemma}
If  $\psi(x,y)$  is a term in which the variable $x$
occurs once with color $\alpha$ and once with color $\beta$, and the variable
$y$ occurs once with color $\gamma$ and once with $1$, then the identity
$\psi(x,y)=\psi(y,x)$  is a consequence of  $\mathcal{B}_{1,2,3}$.
\end{lemma}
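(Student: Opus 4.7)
The plan is to apply Theorem 4.2 to $\psi$, viewing it as a term in the sense of \S4 (no variable occurs more than once with the same color). Since $\psi$ contains leaves of all four colors---two from $x$ and two from $y$---Theorem 4.2 guarantees, modulo $\Sigma$, that $\psi$ has a subterm of one of the two prescribed shapes, namely $((pq)v)(rs)$ or $(u(qp))(rs)$, with $p,q,r,s$ of colors $\alpha,\beta,\gamma,1$, respectively. Because $\mathcal{M}\subset\mathcal{B}_{1,2,3}$ is a basis for $\Sigma$ (Theorem 1.1), this rearrangement is a legitimate derivation from $\mathcal{B}_{1,2,3}$.

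Next, using interchanges of same-colored leaves (all consequences of $\mathcal{M}$ by Lemma 1.2 and Theorem 1.1), I would move the $\alpha$-occurrence of $x$ into position $p$, the $\beta$-occurrence of $x$ into position $q$, the $\gamma$-occurrence of $y$ into position $r$, and the $1$-occurrence of $y$ into position $s$. Each such swap is either trivial (if the target variable is already in place) or a legitimate interchange of two distinct leaves of the same color, since $x$ has only one $\alpha$-occurrence in $\psi$ and similarly for the other three cases. After these swaps, the subterm reads $(x^2v)y^2$ or $(ux^2)y^2$, so exactly one of the two extra axioms of $\mathcal{B}_{1,2,3}$---namely $(x^2z)y^2=(y^2z)x^2$ with $z:=v$, or $(zx^2)y^2=(zy^2)x^2$ with $z:=u$---applies to interchange the two $x$'s with the two $y$'s inside the subterm. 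Since $x$ and $y$ each occur only twice in $\psi$, all four of their occurrences sit inside the subterm, so this local swap realizes the desired global swap. Reversing the intermediate interchanges in the opposite order (as described in the paragraph around Figure 1) restores every other variable to its original location and completes a derivation of $\psi(y,x)$.

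As a sanity check, Lemma 2.1(i) gives $[\psi(x,y)]-[\psi(y,x)]=(\alpha+\beta-\gamma-1)(x-y)$, confirming that the identity lies in $\Sigma_{1,2,3}$. The main obstacle I anticipate is verifying that Theorem 4.2 transfers cleanly from the strictly linear setting to the generalized one of \S4, and carefully handling the degenerate cases in which an $x$- or $y$-occurrence already coincides with one of the positions $p,q,r,s$. Inductive reasoning on rank does not seem necessary: Theorem 4.2 supplies the required subterm in one step, regardless of the size of $\psi$.
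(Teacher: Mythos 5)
Your overall strategy is exactly the paper's: apply Theorem 4.2 to obtain a subterm of the shape $((x_1x_2)v)(y_1y_2)$ or $(u(x_2x_1))(y_1y_2)$, use same-color interchanges (derivable from $\mathcal{M}$ by Theorem 1.1) to funnel the four distinguished occurrences of $x$ and $y$ into the four marked leaf positions, apply one of the two non-$\mathcal{M}$ axioms of $\mathcal{B}_{1,2,3}$ to exchange $x^2$ with $y^2$, and then undo the intermediate interchanges in reverse order. The side issues you flag (extending Theorem 4.2 from linear terms to terms in which no variable repeats a color, and the degenerate cases where an occurrence already sits in a target slot) are real but routine; they are exactly what the set-up of \S4 and the parenthetical ``unless $x$ is already $x_1$'' are for.

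There is, however, one genuine gap. You assert that the marked positions carry the colors $\alpha,\beta,\gamma,1$, and on that basis you send the $\alpha$-occurrence of $x$ to the first slot, the $\beta$-occurrence of $x$ to the second, and so on. Those are the colors of the four slots \emph{relative to the root of the subterm}; Theorem 4.2 controls only the shape of the subterm, not the color $c$ of the vertex at which it sits in the whole tree. (Trace the induction in its proof: in the case with no internal $\alpha$-vertices the subterm is found inside the right factor of $p=ur$, so it sits at a vertex of color $\beta$.) In the ambient term the four slots have colors $c\alpha,c\beta,c\gamma,c$, and an interchange law of $\Sigma$ only permits swapping two leaves of the same \emph{ambient} color; so when $c\ne 1$ your prescribed swaps are not legitimate derivation steps. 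The repair is the case split the paper makes on $c$: since $\{c\alpha,c\beta\}$ is always either $\{\alpha,\beta\}$ or $\{\gamma,1\}$, with $\{c\gamma,c\}$ the complementary pair, you place both occurrences of $x$ into the first pair of slots and both occurrences of $y$ into the second when $c\in\{\gamma,1\}$, and do the opposite when $c\in\{\alpha,\beta\}$; in either case one of $(x^2z)y^2=(y^2z)x^2$ or $(zx^2)y^2=(zy^2)x^2$ performs the required exchange, and the argument then closes as you describe.
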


\begin{proof}
Let $p\equiv\psi(x,y)$ be the term described above.  We can
assume that $x$ and $y$ each occur exactly twice and that no other
variable occurs more than once in  $p$.    Since  $\mathcal{M}$  is a basis for $\Sigma$
by Theorem 1.1,  we can calculate with identities modulo  $\Sigma$.
By Theorem 4.2, there is a term $q$ such that  $p=q$   modulo  $\Sigma$
and $q$ contains a subterm $r$  of the form  $((x_1x_2)v)(y_1y_2)$ or
$(u(x_2x_1))(y_1y_2)$,  where  $x_1$, $x_2$, $y_1$ and $y_2$  are variables. For later
use, record a sequence of vertex interchanges that takes us from
$P$ to $Q$.   Let  $c$  be the color of the vertex $r$  in  $Q$.  If  $c\in\{\mspace{1mu}\gamma,1\mspace{1mu}\}$,
then use interchanges to replace both  $x_1$ and  $x_2$  by the variable
$x$.  For example, if $c=1$, then interchange the $\alpha$-occurrence of  $x$
and the variable $x_1$ (unless $x$ is already $x_1$).
If $c\in\{\mspace{1mu}\alpha,\beta\mspace{1mu}\}$,  replace
both  $x_1$ and  $x_2$  by $y$ .  In the same way, replace $y_1$ and $y_2$  by $x$
or $y$, as appropriate.   By the suitable identity of  $\mathcal{B}_{1,2,3}$,  we can
interchange the subterms $xx$  and $yy$  in the modified subterm $r$.
Now re-do the all the previous interchanges in the reverse order
to obtain  $\psi(y,x)$.
\end{proof}

\begin{theorem}
The set  $\mathcal{B}_{1,2,3}$ is an independent basis for
$\Sigma_{1,2,3}$.
\end{theorem}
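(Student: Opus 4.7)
The plan is to invoke Lemma 5.1 together with Lemma 2.2 while computing modulo $\Sigma$. Since $\mathcal{M} \subseteq \mathcal{B}_{1,2,3}$, Theorem 1.1 allows all $\Sigma$-equalities to be treated as derivable from $\mathcal{B}_{1,2,3}$, and Lemma 2.2(i) reduces the problem to deriving every general identity $p = q$ of $\Sigma_{1,2,3}$ from $\mathcal{B}_{1,2,3}$.

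I would argue by induction on the number of doubly-occurring variables in $p$ (which agrees with the count in $q$ by Lemma 2.2). The base case---zero such variables---is immediate, since then $p = q$ lies in $\Sigma$. For the inductive step, the goal is to apply Lemma 5.1 (or a suitable substitution instance) to swap the color patterns of an $\{\alpha,\beta\}$-twin variable $x$ and a $\{\gamma,1\}$-twin variable $y$, producing an identity of $\Sigma_{1,2,3}$ with strictly fewer doubly-occurring variables; the inductive hypothesis then closes the argument. Auxiliary variables introduced to fit the hypothesis of Lemma 5.1 can be substituted back at the end.

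The main obstacle I foresee is the asymmetric case: a counting argument based on Theorem 4.1 shows that the number of $\{\alpha,\beta\}$-twins and the number of $\{\gamma,1\}$-twins in $p$ need not agree (their difference is forced to be a multiple of $3$, but it may be nonzero). When no opposite-type partner $y$ is available, Lemma 5.1 does not apply directly. My plan is to first use the medial law to regroup several same-type twins into a single compound that behaves as an opposite-type twin at a higher level; for example, $\mathcal{M}$ derives $x_2^2 x_3^2 = (x_2 x_3)(x_2 x_3)$, so the compound $x_2 x_3$ (although it contains two $\{\alpha,\beta\}$-twin variables) occupies positions of complementary colors relative to its parent, functioning as a $\{\gamma,1\}$-twin at that level. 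A substitution instance of Lemma 5.1 with such a compound playing the role of $y$ should then effect the required swap, after which the medial law can undo the compression.

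For the independence claim, every identity in $\mathcal{B}_{1,2,3} \setminus \mathcal{M}$ has rank $5$, strictly greater than the rank $4$ of each identity in $\mathcal{M}$; consequently no local derivation of an identity of $\mathcal{M}$ can employ either of the two new identities, so the independence arguments from Theorem 1.1 transfer verbatim to establish that each identity of $\mathcal{M}$ is independent within $\mathcal{B}_{1,2,3}$. For the independence of each of the two new identities, I would exhibit a groupoid that satisfies $\mathcal{M}$ together with the other new identity but fails the one under consideration; a finite construction derived from $\mathbb{Z}[\mathbf{KL}]$ modulo a suitable ideal should do the job.
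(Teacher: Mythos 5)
Your argument for the basis part follows the paper's proof in all essentials: reduce to general identities via Lemma 2.2(i), pair off opposite-type twins and swap them with Lemma 5.1, observe via Theorem 4.1 that the residual count of same-type twins is a multiple of $3$, and dispose of the residue by packaging two same-type twins into a compound product that occupies a pair of complementary-colored positions and therefore can be fed to a substitution instance of Lemma 5.1. That compound trick is exactly the paper's move: it builds a term $\psi(t,z)$ of total color $(a+1,b+1,c-2,d-2)$ with a fresh variable $t$ of colors $\alpha,\beta$, checks $p=\psi(xy,z)$ in $\Sigma$, applies Lemma 5.1 to $\psi$, and substitutes $xy$ back for $t$. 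The one place where your sketch is thinner than it looks is the justification that the given term can actually be brought into a form exhibiting the compound (your ``$x_2^2x_3^2=(x_2x_3)^2$'' picture presupposes that the two pairs sit as adjacent squares); the paper avoids any such structural claim by invoking the representability criterion of Theorem 4.1 twice --- once to reduce $n=3k$ to $n=3$, and once to manufacture $\psi(t,z)$ out of thin air with the prescribed total color. You will need that representability step; ``undoing the compression with the medial law'' inside $p$ itself is not enough.

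The independence claim is where you have a genuine gap. For the two rank-$5$ identities $\varepsilon_1$ and $\varepsilon_2$ you propose to exhibit a groupoid satisfying $\mathcal{M}$ and one of them but not the other; no such model is produced, and the natural affine models over quotients of $\mathbb{Z}[\mathbf{KL}]$ treat $\varepsilon_1$ and $\varepsilon_2$ symmetrically (both reduce to the same ideal condition of Lemma 2.1(i)), so this route is at best unproven and quite possibly blocked. The paper instead argues syntactically: in a local derivation of $\varepsilon_1$ from $\mathcal{B}_{1,2,3}\setminus\{\varepsilon_1\}$, the identity $\varepsilon_2$ must be used at some first step $p\sim q$; up to that point only identities of $\Sigma$ have been applied, so $\lambda(P)=(1,1,2,1)$, whereas a rank count forces $p$ to be a variable-substitution instance of $(zx^2)y^2$, giving $\lambda(P)=(1,1,1,2)$ --- a contradiction. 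You should replace your semantic plan with an argument of this kind (or supply the separating model explicitly).
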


\begin{proof}
Let  $p=q$  be in $\Sigma_{1,2,3}$.  We shall show that $p=q$
follows from  $\mathcal{B}_{1,2,3}$.    Since $\mathcal{M}$ is a basis for  $\Sigma$ by Theorem 1.1,
we can calculate with identities modulo  $\Sigma$.   We can assume
that $p=q$ satisfies condition (i) of  Lemma 2.2.   Of the variables
that occur twice in $p$,  let $X$ be those that have colors  $\alpha$ and $\beta$,
and let  $Y$  be those that have colors $\gamma$ and  1.  (In $q$, the variables
in $X$  have colors   $\gamma$ and 1, while  the variables in $Y$  have colors
$\alpha$ and $\beta$.)    By symmetry, we can assume that$|X|\le |Y|$.  Let
$f:X\to Y$  be a one-to-one function.

 For each $x\in X$,  Lemma 5.1 shows that both occurrences of
$x$ in $p$  can be exchanged with both occurrences of $f(x)$  in $p$.
Thus, we have reduced to the case that  $X$  is empty.

 If  $Y$  is empty, we are done.  Therefore, we can assume
that the cardinality $n$ of $Y$  is nonzero.   Let  $\lambda(P)=(a,b,c,d)$.
Thus, $\lambda(Q)=(a+n,b+n,c-n,d-n)$.  By Theorem 4.1,
$2(a+b)+c+d\equiv2(a+b+2n)+c+d-2n\equiv1 \pmod{3}$.  Therefore,  $n\equiv0
\pmod{3}$.   Let $n=3k$ with $k>0$.

 From  $a+3k\le\varphi_1(Q)=\varphi_1(P)+2k$, it follows that $a+k\le\varphi_1(P)$.
Therefore,
 $a+1\le\varphi_1(P)$,  so that $a+3\le\varphi_1(P)+2$.  Similarly,  $b+3\le\varphi_1(P)+2$.
Since $c\le\varphi_2(P)$  and $d\le\varphi_2(P)$, the sequence $(a+3,b+3,c-3,d-3)$ is
representable by Theorem 4.1.   By this observation and
induction on $k$, we can assume that $n=3$.  In particular,
$\varphi_1(Q)=\varphi_1(P)+2$, $\varphi_2(Q)=\varphi_2(P)-2$,  $c\ge3$ and $d\ge3$.  Let $Y=\{\mspace{1mu}x,y,z\mspace{1mu}\}$.

 Let  $\mathbf{s}=(a+1,b+1,c-2,d-2)$.  Clearly, $\varphi_1(\mathbf{s})=\varphi_1(P)$  and
$\varphi_2(\mathbf{s})=\varphi_2(P)-2$.  Since both $a+1$ and $b+1$ are at most $\varphi_1(P)$ by
the previous paragraph,  $\mathbf{s}$  is representable by Theorem 4.1.  Let
$\psi(t,z)$ be a term of total color $\mathbf{s}$  whose variables are those of $p$
with $x$ and $y$ removed, and $t$ added.  Moreover, each old variable
occurs with the same colors in $p$ and  $\psi(t,z)$.  The new variable $t$
occurs with colors $\alpha$ and $\beta$ in $\psi(t,z)$.  Since $[p]=[\psi(xy,z)]$, the
identity  $p=\psi(xy,z)$  is in $\Sigma$.  By Lemma 5.1,  $\psi(t,z)=\psi(z,t)$ is a
consequence of  $\mathcal{B}_{1,2,3}$.  Substituting, $xy$ for $t$, we obtain
$p=\psi(xy,z)=\psi(z,xy)\equiv r$.  Since  $[r]=[q]$, the identity  $r=q$ is $\Sigma$, and
we have shown that $\mathcal{B}_{1,2,3}$ is a basis.

 We now show that  $\mathcal{B}_{1,2,3}$   is independent. Let $\varepsilon_1$ denote
$(x^2z)y^2=(y^2z)x^2$  and let $\varepsilon_2$ denote $(zx^2)y^2=(zy^2)x^2$.  Since neither
side of $\varepsilon_1$ or $\varepsilon_2$ is linear, these identities cannot be used in a local
derivation of any identity in $\mathcal{M}$.  Thus, since $\mathcal{M}$  is independent by
Theorem 1.1, no identity in  $\mathcal{M}$ can be omitted from  $\mathcal{B}_{1,2,3}$.
Suppose that there is a local derivation of  $\varepsilon_1$ from  $\mathcal{B}_{1,2,3}-\{\mspace{1mu}\varepsilon_1\mspace{1mu}\}$.
Since $\varepsilon_1$   is not in  $\Sigma$,  the identity $\varepsilon_2$  must be used in this
derivation;  let $p\sim q$  be the first time that $\varepsilon_2$  was used.
Therefore, $\lambda(P)=(1,1,2,1)$.  Since   $p$  and $\varepsilon_2$  both have rank 5,
$p$ is a substitution instance of $(zx^2)y^2$  in which $x$, $y$ and $z$ are
replaced by variables.  Hence, $\lambda(P)=(1,1,1,2)$, a contradiction.
Therefore,   $\varepsilon_1$ cannot be omitted.  Similarly, $\varepsilon_2$ cannot be
omitted.
\end{proof}

\section{Independent finite basis for all operations except $-x+y$}

 Let  $\mathcal{B}_{1,2,4}=\mathcal{M}\cup\{\mspace{1mu}x(x(yz))=(x(zy))x\mspace{1mu}\}$.  We shall show that
$\mathcal{B}_{1,2,4}$   is a basis for $\Sigma_{1,2,4}$.

\begin{lemma}
Both $((xy)z)(xy)=((yx)z)(yx)$ and
$(z(yx))(xy)=(z(xy))(yx)$ are consequences of $\mathcal{B}_{1,2,4}$.
\end{lemma}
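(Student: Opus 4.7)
The strategy for both identities is the same. The defining identity $\varepsilon\colon x(x(yz))=(x(zy))x$ of $\mathcal{B}_{1,2,4}$ flips the colors of its variable $x$ between the color-sets $\{\alpha,\gamma\}$ and $\{\beta,1\}$, while $y$ and $z$ are unaffected. The two identities of the lemma each interchange two variables whose color-sets are precisely $\{\alpha,\gamma\}$ and $\{\beta,1\}$. The plan is: apply $\varepsilon$ to flip $x$'s colors so that $x$ and $y$ share a common color-set; use $\mathcal{M}$ to swap them at the level of $\Sigma$ (legitimate by Theorem 1.1); and then apply $\varepsilon$ once more to transfer the shared set back to $y$.

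For the first identity I propose the chain
\begin{align*}
((xy)z)(xy) &\stackrel{(\text{M3})}{=} ((x(xy))z)y \stackrel{(\text{M4})}{=} ((x(zy))x)y \stackrel{\varepsilon}{=} (x(x(yz)))y \\
&\stackrel{\Sigma}{=} (y(y(xz)))x \stackrel{\varepsilon}{=} ((y(zx))y)x \stackrel{(\text{M4})}{=} ((y(yx))z)x \stackrel{(\text{M3})}{=} ((yx)z)(yx),
\end{align*}
in which the $\Sigma$-step is the $x \leftrightarrow y$ swap, valid because both variables occur with colors $\{1,\beta\}$ in $(x(x(yz)))y$.  Each (M$k$)- and $\varepsilon$-step is a direct substitution match, and the second half mirrors the first.

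For the second identity the color multiset of either side is $(1,1,1,2)$, and a single $\Sigma$-equality suffices to bring the left-hand side into $\varepsilon$-range:
\begin{equation*}
(z(yx))(xy) \stackrel{\Sigma}{=} (y((xz)x))y \stackrel{\varepsilon}{=} y(y(x(xz))) \stackrel{\Sigma}{=} x(x(y(yz))) \stackrel{\varepsilon}{=} (x((yz)y))x \stackrel{\Sigma}{=} (z(xy))(yx).
\end{equation*}
The two $\varepsilon$-steps use the substitutions $x\mapsto y,\ y\mapsto x,\ z\mapsto xz$ and $z\mapsto yz$, respectively.

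The main obstacle is finding the intermediate forms: a direct color-profile computation shows that neither $((xy)z)(xy)$ nor $(z(yx))(xy)$ is, nor contains a rank-$4$ subterm that is, a substitution instance of either side of $\varepsilon$; so the preliminary $\mathcal{M}$-massaging---explicit for the first identity, packaged into a single $\Sigma$-equality for the second---must be discovered before $\varepsilon$ can be brought to bear. Once the forms above are in hand, each (M$k$)-application is routine, and each $\Sigma$-step is justified by checking that every variable has the same color-set on both sides (Theorem 1.1).
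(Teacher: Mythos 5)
Your proof is correct and follows essentially the same route as the paper's: maneuver the term into $\varepsilon$-range using $\mathcal{M}$ (i.e., modulo $\Sigma$, via Theorem 1.1), apply $\varepsilon$, perform the $x\leftrightarrow y$ swap as a $\Sigma$-identity now that both variables carry the same colors, and apply $\varepsilon$ again. The only cosmetic differences are that you write out the $\mathcal{M}$-steps and the symmetric second half explicitly where the paper invokes a variable-renaming shortcut, and you use slightly different intermediate terms for the second identity; all your color checks and substitution matches are valid.
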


\begin{proof}
Let $\varepsilon$ denote  $x(x(yz))=(x(zy))x$.  Since
$((xy)z)(xy)=((x(zy))x)y$  is in  $\Sigma$,  we can derive it from  $\mathcal{M}$  (by
Theorem 1.1).  The identity $((x(zy))x)y=(x(x(yz)))y\equiv p$  is a
consequence of $\varepsilon$.  Since $x$ and $y$ occur with the same colors in
$p$, the identity $p=(y(y(xz)))x$  is in $\Sigma$.  Thus, we have derived
$((xy)z)(xy)=(y(y(xz)))x$.  By interchanging $x$ and $y$ in this
identity, we obtain $((yx)z)(yx)=p$.  Hence,
$((xy)z)(xy)=((yx)z)(yx)$.

 We now give the argument for the second identity.  The
identity $(z(yx))(xy)=x((y(zx))y)$ is in $\Sigma$.  Consequently,
$x((y(zx))y)=x(y(y(xz)))\equiv q$ using  $\varepsilon$.  Since the identity
$q=y(x(x(yz)))$  is in $\Sigma$, we have derived $(z(yx))(xy)=y(x(x(yz)))$.
Interchange $x$ and $y$ to obtain  $(z(xy))(yx)=q$.  Hence,
$(z(yx))(xy)=(z(xy))(yx)$.
\end{proof}

\begin{lemma}
If  $\psi(x,y)$  is a term in which the variable $x$
occurs with colors $\alpha$ and $\gamma$, and the variable $y$ occurs with colors
$\beta$ and 1, then the identity $\psi(x,y)=\psi(y,x)$  is a consequence of
$\mathcal{B}_{1,2,4}$.
\end{lemma}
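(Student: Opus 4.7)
The plan is to proceed along the lines of Lemma 5.1's proof, with Theorem 4.2 locating a convenient subterm and Lemma 6.1 providing the swap. Working modulo $\Sigma$ is legitimate because $\mathcal{M}\subseteq\mathcal{B}_{1,2,4}$ and $\mathcal{M}$ is a basis for $\Sigma$ by Theorem 1.1. As a first step, I would reduce to the case in which $x$ and $y$ each occur exactly twice in $\psi$ and no other variable occurs more than once, mirroring the normalization at the start of Lemma 5.1.

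Next I exploit the fact that $\psi(x,y)$ contains variable occurrences of all four colors, which permits the application of Theorem 4.2. This produces a term $q$, equal to $\psi(x,y)$ modulo $\Sigma$, that contains a subterm of the form $((x_1x_2)v)(y_1y_2)$ or $(u(x_2x_1))(y_1y_2)$, where $x_1,x_2,y_1,y_2$ are the variables of colors $\alpha,\beta,\gamma,1$ respectively. Since $x$ carries colors $\alpha$ and $\gamma$ while $y$ carries $\beta$ and $1$, the identifications $x_1=y_1=x$ and $x_2=y_2=y$ must hold, so the subterm is either $((xy)v)(xy)$ or $(u(yx))(xy)$.

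At this stage I invoke Lemma 6.1 inside the subterm: the identity $((xy)z)(xy)=((yx)z)(yx)$, with $z$ substituted by $v$, converts the first form into $((yx)v)(yx)$, while $(z(yx))(xy)=(z(xy))(yx)$, with $z$ substituted by $u$, converts the second form into $(u(xy))(yx)$. Because all four occurrences of $x$ and $y$ in $\psi$ lie inside this subterm, the local swap amounts to a global interchange of $x$ and $y$: every variable, including the singly-occurring ones outside the subterm, ends up carrying the same list of colors as in $\psi(y,x)$. Hence the transformed term equals $\psi(y,x)$ modulo $\Sigma$ by Theorem 1.1, completing the derivation from $\mathcal{B}_{1,2,4}$.

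The main technical subtlety is that Theorem 4.2 is stated for linear terms whereas $\psi$ is not linear, so one must verify that the variables $x_1,x_2,y_1,y_2$ produced by Theorem 4.2 can be identified with the two occurrences each of $x$ and $y$ in $\psi$; this is immediate from the framework of \S4, since no variable in $\psi$ occurs more than once with the same color. Once this identification is in hand, the derivation is essentially mechanical.
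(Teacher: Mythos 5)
Your overall strategy is exactly the paper's: the paper proves this lemma by declaring it ``a slight modification of the proof of Lemma 5.1,'' i.e., normalize $\psi$, apply Theorem 4.2, and use the two identities of Lemma 6.1 to perform the swap. However, the one step you dismiss as immediate is the place where real work is needed, and your justification for it is incorrect. Theorem 4.2 only guarantees a subterm of the shape $((x_1x_2)v)(y_1y_2)$ or $(u(x_2x_1))(y_1y_2)$ for \emph{some} variables $x_1,x_2,y_1,y_2$; nothing forces these to be occurrences of $x$ and $y$. The term $\psi(x,y)$ may contain many singly-occurring variables, and any of the four distinguished positions may be occupied by one of them, so the claim that ``the identifications $x_1=y_1=x$ and $x_2=y_2=y$ must hold'' is false as stated. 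Moreover, the colors $\alpha,\beta,\gamma,1$ of those four positions are computed relative to the root of the subterm $r$; if $r$ sits at a vertex of color $c\ne 1$ in $Q$, the global colors of the positions are $c\alpha,c\beta,c\gamma,c$, so even the assignment of $x$ versus $y$ to the positions depends on $c$.

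The repair is exactly what the paper does in the proof of Lemma 5.1: after locating the subterm, record the color $c$ of its root and use interchange laws (the double rule) to move the two occurrences of $x$ into the two positions whose global colors lie in $\{\mspace{1mu}\alpha,\gamma\mspace{1mu}\}$ and the two occurrences of $y$ into the positions whose global colors lie in $\{\mspace{1mu}\beta,1\mspace{1mu}\}$. Since multiplication by $c$ permutes these two sets, the modified subterm is always $((xy)v)(xy)$ or $((yx)v)(yx)$ (or the corresponding second form), up to swapping the names of $x$ and $y$, and then Lemma 6.1 applies as you say. With that step inserted your argument goes through; your closing observation that the resulting term equals $\psi(y,x)$ because every variable carries the same multiset of colors (hence the identity is in $\Sigma$ and follows from $\mathcal{M}$) is a legitimate alternative to the paper's ``re-do the interchanges in reverse order.''
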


\begin{proof}
We use the two identities of Lemma 6.1.  The rest
of the proof is a slight modification of the proof of Lemma 5.1.
In this proof, we use condition (ii) of Lemma 2.2 and we
interchange  the  subterms $xy$ and $yx$ (rather
than $xx$ and $yy$).  Also, the two pairs of colors are now $\{\mspace{1mu}\alpha,\gamma\mspace{1mu}\}$ and
$\{\mspace{1mu}\beta,1\mspace{1mu}\}$. (Multiplication by any element of the Klein $4$-group
permutes these two sets.)
\end{proof}

\begin{theorem}
The set  $\mathcal{B}_{1,2,4}$ is an independent basis for
$\Sigma_{1,2,4}$.
\end{theorem}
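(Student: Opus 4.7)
The plan is to mirror the proof of Theorem 5.2, using the tools developed in this section. Since $\mathcal{M}$ is a basis for $\Sigma$ by Theorem 1.1, we compute modulo $\Sigma$; and by Lemma 2.2(ii) we may further assume $p=q$ is a \emph{general identity} of $\Sigma_{1,2,4}$, meaning every variable that does not occur once with the same color on both sides occurs exactly twice, with colors $\{\alpha,\gamma\}$ on one side and $\{\beta,1\}$ on the other. Let $X$ be the set of such variables that have colors $\{\alpha,\gamma\}$ in $p$, and let $Y$ be those that have colors $\{\beta,1\}$ in $p$. By symmetry assume $|X|\le|Y|$ and fix a one-to-one map $f\colon X\to Y$.

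For each $x\in X$, Lemma 6.2 produces $\psi(x,f(x))=\psi(f(x),x)$, swapping the color-pairs of $x$ and $f(x)$ in $p$ so as to match $q$. Applying this for every $x\in X$ reduces to the case $X=\emptyset$ with $|Y|=n\ge0$, where $n=|Y_{\mathrm{orig}}|-|X_{\mathrm{orig}}|$. If $n=0$, the residual identity lies in $\Sigma$ and follows from $\mathcal{M}$. If $n>0$, we have $\lambda(Q)=\lambda(P)+n\,(1,-1,1,-1)$, and both $4$-tuples are representable by Theorem 4.1. Following the template of Theorem 5.2, we induct on $n$: at each step we construct a term $\psi(t,\ldots)$ of suitably reduced total color carrying a fresh variable $t$ with colors $\{\alpha,\gamma\}$; Lemma 6.2 lets us swap $t$ with an existing $\{\beta,1\}$-variable; and substituting an appropriate term $\tau$ back for $t$ recovers $p$ on one side and $q$ on the other.

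The main obstacle will be the choice of $\tau$. For $\Sigma_{1,2,3}$ the simple choice $\tau=xy$ worked cleanly because $(\alpha+\beta)(\alpha x+\beta y)=(\gamma+1)(x+y)$ lands in the correct color class. For $\Sigma_{1,2,4}$ no analogous $\tau$ built solely from $Y$-variables can work: setting $\beta^{\#}=1^{\#}=0$ in Theorem 4.1 gives an inconsistent pair of inequalities, so no tree has all leaves at signatures in $\{\alpha,\gamma\}$. We therefore expect that $\tau$ must either involve a shared ``helper'' variable whose colors can accommodate the shift, or combine a $Y$-variable with a rank-$\ge2$ subtree, with a representability calculation of the kind in \S4 guaranteeing that the intermediate term $\psi(t,\ldots)$ exists.

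Independence, by contrast, is routine. $\mathcal{M}$ is independent by Theorem 1.1, and the extra identity $\varepsilon\colon x(x(yz))=(x(zy))x$ lies outside $\Sigma$, since its two sides assign $x$ the distinct color-pairs $\{\alpha,\gamma\}$ and $\{\beta,1\}$; hence $\varepsilon$ cannot be derived from $\mathcal{M}$. Conversely, since neither side of $\varepsilon$ is linear while every identity in $\mathcal{M}$ has both sides linear of rank 4, the rank-and-linearity argument of Theorem 5.2 shows that no identity of $\mathcal{M}$ can be derived from the remaining identities of $\mathcal{B}_{1,2,4}$.
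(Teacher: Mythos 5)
Your reduction to general identities, the use of Lemma~6.2 to empty $X$, and the independence argument are all correct and match the paper. But the heart of the theorem --- the induction on $n=|Y|$ when $Y$ is nonempty --- is left unresolved. You say ``the main obstacle will be the choice of $\tau$'' and then only speculate that $\tau$ must involve a helper variable or a rank-$\ge2$ subtree. Your own observation that no tree has all its leaves colored in $\{\alpha,\gamma\}$ correctly shows that the $\Sigma_{1,2,3}$ template (a fresh variable $t$, Lemma~6.2, then a substitution $t\mapsto\tau$) cannot be transplanted directly, but you stop there. That is a genuine gap: without this step the basis claim is unproved.

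The paper closes the gap by a different mechanism: it does not route the $n>0$ step through Lemma~6.2 at all, but uses the extra identity $\varepsilon\colon x(x(yz))=(x(zy))x$ directly as a color-flipper. With $\lambda(P)=(a,b,c,d)$ and $\lambda(Q)=(a+n,b-n,c+n,d-n)$, Theorem~4.1 gives $a+n\le\varphi_1$ and $c+n\le\varphi_2$, and since $\varphi_i(a,b-1,c,d-1)=\varphi_i-1$ and $(a,b-1,c,d-1)\ne(0,0,0,1)$ (because $(0,1,0,2)$ is not representable), the $4$-tuple $(a,b-1,c,d-1)$ is representable. One realizes it as the total color of a term $rs$ whose variables are those of $p$ with one chosen $x\in Y$ deleted, each with its multiplicities and colors preserved. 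In $(x(sr))x$ the variable $x$ carries colors $1$ and $\beta$, so $[p]=[(x(sr))x]$ and $p=(x(sr))x$ holds modulo $\Sigma$; applying $\varepsilon$ yields $x(x(rs))$, where $x$ now carries colors $\alpha$ and $\gamma$, matching its colors in $q$ and dropping $n$ by one. This representability-plus-$\varepsilon$ argument is exactly the ingredient your proposal is missing; you would need to supply it (or an equivalent) before the proof is complete.
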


\begin{proof}
We write $\varepsilon$ for the identity  $x(x(yz))=(x(zy))x$.  Let
$p=q$  be in $\Sigma_{1,2,4}$.  We shall show that $p=q$ follows from  $\mathcal{B}_{1,2,4}$.
Since $\mathcal{M}$ is a basis for  $\Sigma$ by Theorem 1.1, we can calculate with
identities modulo  $\Sigma$.   We can assume that $p=q$ satisfies
condition (ii) of  Lemma 2.2.   Of the variables that occur exactly
twice in $p$,  let $X$ be those having colors  $\alpha$ and $\gamma$, and let  $Y$  be
those having colors $\beta$ and  1.  (In $q$, the variables in $X$  have
colors   $\beta$ and 1, while  the variables in $Y$  have colors  $\alpha$ and $\gamma$.)
By symmetry, we can assume that $|X|\le |Y|$.  Let $f:X\to Y$  be a
one-to-one function.

 For each $x\in X$,  Lemma 6.2 shows that both occurrences of
$x$ in $p$  can be exchanged with both occurrences of $f(x)$  in $p$.
Thus, we have reduced to the case that  $X$  is empty.

 If  $Y$  is empty, we are done.  Therefore, we can assume
that the cardinality $n$ of $Y$  is nonzero.   Let  $\lambda(P)=(a,b,c,d)$.
Thus, $\lambda(Q)=(a+n,b-n,c+n,d-n)$.  For $i=1,2$, let us write $\varphi_i$ for
the common values of  $\varphi_i(P)$ and $\varphi_i(Q)$.  Observe that
$\varphi_i(a,b-1,c,d-1)=\varphi_i-1$.   Since $(0,1,0,2)$ is not representable, the
$4$-tuple $(a, b-1,c,d-1)$ is not $(0,0,0,1)$.  From Theorem 4.1,
$a+n\le\varphi_1$  and $c+n\le\varphi_2$.  Thus, by Theorem 4.1, $(a, b-1,c,d-1)$ is
representable.  Let $rs$ be a term whose total color is
$(a, b-1,c,d-1)$.  We impose additional conditions on the variables
in the term $\emph{rs}$.   Choose some $x\in Y$ and let the variables of $rs$ be
those of $p$  without  $x$.  Moreover, each remaining variable
occurs exactly as many times in $rs$  as it does in $p$,  and with the
same colors.  Consequently, $[p]=[(x(sr))x]$,  which means that
$p=(x(sr))x$  is in $\Sigma$.  By $\varepsilon$,  $(x(sr))x=x(x(rs))$.  Thus, we have
derived $p=p'\equiv x(x(rs))$, where
$\lambda(P')=(a+1,b-1,c+1,d-1)$.  Observe that the variable $x$  occurs
with the same colors in $p'$ and $q$.  By induction on $n$,  we have
shown that $\mathcal{B}_{1,2,4}$ is a basis.

 We now show that  $\mathcal{B}_{1,2,4}$   is independent.  As in the proof
of Theorem 5.1, no identity in  $\mathcal{M}$ can be omitted.  Since $\varepsilon$  is not
in $\Sigma$, it can also not be omitted.
\end{proof}

\section{Finite basis for all operations except $x+y$}

\begin{theorem}
The set  $\mathcal{B}_{2,3,4}=\mathcal{M}\cup\{\mspace{1mu}(xy^2)y^2=x\mspace{1mu}\}$ is a basis
for  $\Sigma_{2,3,4}$.
\end{theorem}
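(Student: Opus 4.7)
The plan is to use Lemma~2.1(iii), which tells us that $p=q\in\Sigma_{2,3,4}$ iff $[p]-[q]=\sum_{x}n_{x}(\alpha+\beta+\gamma+1)x$ for integers $n_{x}$.  Since $\mathcal{M}\subseteq\mathcal{B}_{2,3,4}$ is a basis for $\Sigma$ by Theorem~1.1, I may reason modulo $\Sigma$, leaving the law $(xy^{2})y^{2}=x$ as the only additional tool.  The idea is to use this law in reverse to inflate $p$ and/or $q$ until they have equal $[\,\cdot\,]$-values; at that point the modified identity lies in $\Sigma$ and is therefore derivable from $\mathcal{M}$.

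The first step I would carry out is the short color calculation that drives everything.  Suppose $s$ is any subterm whose root vertex has color $c$ in the ambient tree, and that we apply the local rewrite $s\to(st^{2})t^{2}$ for some variable $t$.  Descending the new subtree, the four new leaves labelled $t$ inherit colors $c\beta,\,c\alpha,\,c\gamma,\,c$; since multiplication by $c$ permutes $\mathbf{K}\mathbf{L}$, these exhaust $\mathbf{K}\mathbf{L}$ regardless of $c$.  Hence one local application of $(xy^{2})y^{2}=x$ (used in the direction $x\to(xt^{2})t^{2}$) adds exactly $(\alpha+\beta+\gamma+1)t$ to the coefficient of $t$ in $[p]$ and leaves every other coefficient untouched.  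This both verifies that $(xy^{2})y^{2}=x$ lies in $\Sigma_{2,3,4}$ (so that $\mathcal{B}_{2,3,4}\subseteq\Sigma_{2,3,4}$) and supplies the balancing move required below.

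With this in hand, let $p=q$ be in $\Sigma_{2,3,4}$, and write $[p]-[q]=\sum_{x}n_{x}(\alpha+\beta+\gamma+1)x$.  For each variable $x$ with $n_{x}>0$, apply the law $n_{x}$ times at the root of $q$ with $t=x$; for each $x$ with $n_{x}<0$, apply it $|n_{x}|$ times at the root of $p$ with $t=x$.  (If $x$ occurs in only one of $p,q$, Lemma~2.1(iii) forces its sole coefficient to be a nonnegative integer multiple of $\alpha+\beta+\gamma+1$, so the prescription still makes sense; it also works when $p$ or $q$ is a single variable, since the root is always available.)  The resulting terms $p'$, $q'$ satisfy $[p']=[q']$, so $p'=q'\in\Sigma$ and is derivable from $\mathcal{M}$ by Theorem~1.1.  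Each inflation step is locally reversible, so concatenating them yields $p\leftrightarrow p'\leftrightarrow q'\leftrightarrow q$, finishing the derivation.

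The only genuinely non-routine step in this plan is the color bookkeeping of the middle paragraph, and even that reduces to one small tree computation.  In particular, I do not expect to need the representability result of \S4 or the Lemma~5.1/Lemma~6.2 style variable-surgery of \S5--6, because the inflation identity is flexible enough to balance each variable independently and in a single pass; all three operation cases collapse into the uniform move $s\to(st^{2})t^{2}$.
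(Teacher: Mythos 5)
Your proof is correct and follows essentially the same route as the paper: both rest on Lemma~2.1(iii) together with the observation that one reverse application of $(xy^2)y^2=x$ at the root shifts $[\,\cdot\,]$ by exactly $(\alpha+\beta+\gamma+1)t$, reducing the problem to an identity in $\Sigma$ that Theorem~1.1 disposes of. The only difference is cosmetic: the paper first collapses every variable occurring with all four colors into a single fixed variable $t$ (via an identity already in $\Sigma$) so that the defect $[p]-[q]$ is concentrated on one variable before inflating, whereas you balance each variable independently at the root; your version is slightly more direct and equally valid.
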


\begin{proof}
Let  $p=q$  be in $\Sigma_{2,3,4}$.  We shall show that $p=q$
follows from  $\mathcal{B}_{2,3,4}$.  Recall that $\mathcal{M}$ is a basis for  $\Sigma$ by Theorem
1.1.  Let  $t$  be a fixed variable.  If a variable $x$  occurs with all
four colors in $p$,  then the calculation $p=(pt^2)t^2=(p'x^2)x^2=p'$
shows that we can replace these four occurrences of  $x$  by $t$.
(The term $p'$ is the term $p$ with this replacement; the identity
$(pt^2)t^2=(p'x^2)x^2$  is in $\Sigma$.)  Repeat this replacement as often as
possible on both $p$  and $q$.  Thus, we can assume that $t$  is the
only variable that occurs with all four colors in either $p$ or $q$.

 By Lemma 2.1,   $[p]-[q]=n(\alpha+\beta+\gamma+1)t$ for some integer $n$.
By symmetry, we can assume that $n\ge0$.  Repeat the operation
$r\mapsto(rt^2)t^2$ $n$  times on  $q$  to obtain $q'$.  Since $p=q'$ is in $\Sigma$, we
have shown that $\mathcal{B}_{2,3,4}$  is a basis.
\end{proof}

\begin{theorem}
The set
$\mathcal{B}=\{\mspace{1mu}\textup{(M1)},\:\textup{(M2)},\:(xy^2)y^2=x\mspace{1mu}\}$
 is a basis for  $\Sigma_{2,3,4}$.
\end{theorem}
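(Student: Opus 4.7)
The plan is to invoke Theorem 7.1, which shows $\mathcal{B}_{2,3,4}=\mathcal{M}\cup\{(xy^2)y^2=x\}$ is a basis for $\Sigma_{2,3,4}$. Since $\mathcal{B}$ already contains (M1), (M2), and the cancellation identity, it suffices to derive the remaining four mutation laws (M3)--(M6) from $\mathcal{B}$.

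I first observe that $\mathcal{B}$ is effectively closed under duality. Both (M1) and (M2) are self-dual up to variable renaming, so their duals are substitution instances of themselves. The dual of the cancellation law is $y^2(y^2x)=x$, which is derivable from $\mathcal{B}$: apply (M1) to $(xy^2)y^2=(x(yy))(yy)$ to obtain $(xy)(y^2y)=x$, then apply (M2) to the left-hand side with $a=x$, $b=y$, $c=y^2$, $d=y$ to obtain $y^2(y^2x)=x$. Thus the dual of every identity derivable from $\mathcal{B}$ is also derivable from $\mathcal{B}$. Since the dual of (M3) is (M6) and the dual of (M4) is (M5) (after renaming variables), it suffices to derive (M3) and (M4); their duals then follow by dualizing the same proofs.

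For (M3), I would substitute $t=(ty^2)y^2$ in $((xy)z)t$ to get $((xy)z)((ty^2)y^2)$. A single application of (M1) at the top gives $((xy)(ty^2))(zy^2)$; an interior (M1) converts $(xy)(ty^2)$ into $(xt)(yy^2)$; and a final (M1) at the top yields $((xt)z)((yy^2)y^2)$. The cancellation identity with $x=y$ collapses $(yy^2)y^2$ to $y$, giving the right-hand side $((xt)z)y$. For (M4), I would start from the right-hand side $(x(tz))y$ and substitute $y=(yt^2)t^2$. A sequence of five (M1) rearrangements---at the top, then on the interior subterm $(tz)t^2$ (converting it to $t^2(zt)$), then at the top, then on the interior subterm $(yt^2)(zt)$ (converting it to $(yz)(t^2t)$), then at the top---accumulates all the auxiliary occurrences into the subterm $t^2(t^2t)$ on the right, producing $(x(yz))(t^2(t^2t))$. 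The derived identity $t^2(t^2t)=t$, obtained from $(xy^2)y^2=x$ via (M1) and the specialization $x=y$, then yields $(x(yz))t$. Reversing the derivation gives (M4); the duals of these two derivations give (M5) and (M6).

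The main obstacle is discovering the correct ``padding'' substitutions ($t=(ty^2)y^2$ for (M3) and $y=(yt^2)t^2$ for (M4)) and sequencing the (M1) applications so that all the auxiliary occurrences accumulate on one side of the top-level product, where the cancellation law can absorb them in a single step. The duality argument, though conceptually clean, requires the preliminary derivation of $y^2(y^2x)=x$ to ensure every dualized step of a $\mathcal{B}$-derivation is again a $\mathcal{B}$-derivation step.
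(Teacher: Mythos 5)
Your proposal is correct and follows essentially the same route as the paper: invoke Theorem 7.1, derive the dual cancellation law $y^2(y^2x)=x$ to reduce to (M3) and (M4), and obtain each of these by padding a variable via $(xy^2)y^2=x$ and shuffling with the medial law. The only cosmetic differences are that you reuse a variable of the identity as the padding variable instead of a fresh one, and your (M4) chain uses only (M1) where the paper also uses (M2); both variants check out.
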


\begin{proof}
By Theorem 7.1, it suffices to derive (M3) to (M6)
from  $\mathcal{B}$.  It is easy to derive $y^2(y^2x)=x$, the dual of $(xy^2)y^2=x$,
from  $\mathcal{B}$.  Thus,   we can apply duality.  Consequently, it suffices to
derive (M3) and (M4).

 Using $\mathcal{B}$,
$((xy)z)t=((xy)z)((tu^2)u^2)=((xy)(tu^2))(zu^2)=((xt)(yu^2))(zu^2)$,
so that (M3) is a consequence.  Similarly,
$(x(yz))t=(x(yz))t=(x(yz))(u^2(u^2t))=(xu^2)((yz)(u^2t))=(xu^2)((tz)(u^2y))$,
so that (M4) is a consequence.
\end{proof}

\section{Multicirculant matrices}

 In the next section, we shall apply Theorem 8.1 below.
This theorem is due to P.J. Davis (see \S5.8 of \cite{pD79}).  We
include an elementary proof of Davis's result.

 For $k\ge1$  and a sequence  $\mathbf{s}=(s_1, s_2,\dots, s_k)$ of positive
integers,  let
$\mathcal{G}(\mathbf{s})=S_1\times S_2\times\dots\times S_n$,
where for $1\le i\le k$, $S_i$ is the additive group of
integers modulo $s_i$.  Let $n=s_1s_2\dots s_k$.  We also define a bijection
from the group  $\mathcal{G}(\mathbf{s})$ onto the set
$\{\mspace{1mu}0,1,2, \dots ,n-1\mspace{1mu}\}$ by
\begin{equation*}
   (x_1, x_2, x_3,\dots, x_k)^*=x_1+x_2s_1+x_3s_1s_2+\dots+x_k(s_1s_2\dots s_{k-1}).
\end{equation*}
Observe that $(0,0, \dots , 0)^*=0$.

 We shall define an $n\times n$ matrix  $\mathcal{M}(\mathbf{s})=[a_{i,j}]$,  where $0\le i,j<n$.
The top row (when $i=0$) is arbitrary.  For nonzero $i=(x_1, x_2,  \dots , x_k)^*$
and any  $j=(y_1, y_2,\dots, y_k)^*$,
$a_{i,j}=a_{0,t}$ where $t=(y_1-x_1, y_2-x_2,\dots, y_k-x_k)^*$.  We call  $\mathcal{M}(\mathbf{s})$ a
\emph{multicirculant} matrix of \emph{level}  $k$.

 Let  $G$ be a finite abelian group, written additively.  For
$g\in G$, let  $\chi_g$  be the character associated with $g$.   It is well
known that  $\sum(\chi_g(h)\mid h\in G)$ equals $|G|$ when $g=0$, and equals zero for
every other $g$.

\begin{theorem}
For  $k\ge1$, let   $\mathbf{s}=(s_1, s_2,\dots, s_k)$ be a
sequence of positive integers whose product is $n$.  For each
$\mathbf{x}=(x_1, x_2,  \dots , x_k)\in\mathcal{G}(\mathbf{s})$,
let $c_\mathbf{x}$ be a complex number.  Let  $A=\mathcal{M}(\mathbf{s})$ be
the multicirculant matrix  of level $k$ that is defined by setting
$a_{0,\mathbf{x}^*}=c_\mathbf{x}$  for every  $\mathbf{x}\in\mathcal{G}(\mathbf{s})$.  The eigenvalues of $A$ (including
multiplicities) are
$\sum(c_\mathbf{x}\xi^{x_1}_1\xi^{x_2}_2\cdots\xi^{x_k}_k\mid\mathbf{x}\in\mathcal{G}(\mathbf{s}))$
as each $\xi_i$ runs over
all $s_i$-th roots of unity.
\end{theorem}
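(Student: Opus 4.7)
The plan is to exhibit, for each tuple $(\xi_1,\xi_2,\dots,\xi_k)$ of roots of unity (with $\xi_i$ an $s_i$-th root of unity), an explicit eigenvector of $A$ whose eigenvalue is the claimed sum. Since there are exactly $s_1s_2\cdots s_k=n$ such tuples, proving linear independence of the corresponding eigenvectors will account for all $n$ eigenvalues (with multiplicities), finishing the theorem.

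For each such tuple, define a vector $v$ indexed by $\mathcal{G}(\mathbf{s})$ by setting $v_{\mathbf{y}^*}=\xi_1^{y_1}\xi_2^{y_2}\cdots\xi_k^{y_k}$ for $\mathbf{y}=(y_1,\dots,y_k)\in\mathcal{G}(\mathbf{s})$. Note that since $\xi_i^{s_i}=1$, this definition is well defined on cosets modulo $s_i$, which matches the structure of $\mathcal{G}(\mathbf{s})$. Next, I would compute $(Av)_{\mathbf{x}^*}$ for an arbitrary $\mathbf{x}\in\mathcal{G}(\mathbf{s})$. By the multicirculant definition, $a_{\mathbf{x}^*,\mathbf{y}^*}=c_{\mathbf{y}-\mathbf{x}}$, so the sum becomes $\sum_{\mathbf{y}}c_{\mathbf{y}-\mathbf{x}}\xi_1^{y_1}\cdots\xi_k^{y_k}$. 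Reindexing with $\mathbf{z}=\mathbf{y}-\mathbf{x}$ in $\mathcal{G}(\mathbf{s})$ factors this as $\xi_1^{x_1}\cdots\xi_k^{x_k}\sum_{\mathbf{z}}c_{\mathbf{z}}\xi_1^{z_1}\cdots\xi_k^{z_k}$, which equals $\lambda\cdot v_{\mathbf{x}^*}$ where $\lambda$ is exactly the claimed eigenvalue. Hence $v$ is an eigenvector with the advertised eigenvalue.

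To prove that the resulting $n$ eigenvectors are linearly independent, I would invoke the character orthogonality relation cited just above the theorem. The assignment $\mathbf{y}\mapsto\xi_1^{y_1}\cdots\xi_k^{y_k}$ is precisely a character $\chi$ of the abelian group $\mathcal{G}(\mathbf{s})$, and every character of $\mathcal{G}(\mathbf{s})$ arises this way by varying $(\xi_1,\dots,\xi_k)$. Forming the inner product of two such eigenvectors $v,v'$ coming from characters $\chi,\chi'$ yields $\sum_{\mathbf{y}\in\mathcal{G}(\mathbf{s})}\chi(\mathbf{y})\overline{\chi'(\mathbf{y})}=\sum_{\mathbf{y}}(\chi\overline{\chi'})(\mathbf{y})$, which equals $n$ if $\chi=\chi'$ and zero otherwise. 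The eigenvectors are therefore orthogonal and nonzero, hence linearly independent; since they span $\mathbb{C}^n$, the $n$ eigenvalues listed are the full spectrum of $A$.

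The only subtle point, and the one I would be careful about, is making sure that the reindexing $\mathbf{z}=\mathbf{y}-\mathbf{x}$ is performed in the group $\mathcal{G}(\mathbf{s})$ (not in $\mathbb{Z}^k$) so that it agrees with the recipe $a_{i,j}=a_{0,t}$ in the definition of $\mathcal{M}(\mathbf{s})$; this is precisely why $\xi_i$ is required to be an $s_i$-th root of unity, making $\xi_1^{z_1}\cdots\xi_k^{z_k}$ well defined on $\mathcal{G}(\mathbf{s})$. Everything else is a routine verification.
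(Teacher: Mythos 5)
Your proposal is correct and follows essentially the same route as the paper: the same explicit eigenvectors $v_{\mathbf{y}^*}=\xi_1^{y_1}\cdots\xi_k^{y_k}$, the same reindexing $\mathbf{z}=\mathbf{y}-\mathbf{x}$ inside $\mathcal{G}(\mathbf{s})$ to extract the eigenvalue, and the same character-sum identity for linear independence. The only cosmetic difference is that you phrase the independence step as pairwise orthogonality of the eigenvectors, while the paper packages the identical computation as $PQ=nI$ for the matrices $P$ and $Q$ built from the eigenvectors.
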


\begin{proof}
 Let $\lambda=\sum(c_\mathbf{x}\xi^{x_1}_1\xi^{x_2}_2\cdots\xi^{x_k}_k\mid\mathbf{x}
\in\mathcal{G}(\mathbf{s}))$, where  $\xi_i$ is an $s_i$-th root of unity for  $1\le i\le k$.  Also, let $\mathbf{v}=(v_0, v_1,\dots, v_{n-1})$,
where $v_{\mathbf{x}^*}=\xi^{x_1}_1\xi^{x_2}_2\cdots\xi^{x_k}_k$.  We first show that $\mathbf{v}$ is an eigenvector
for $\lambda$.  The dot product of row  $\mathbf{y}^{*}$ of
$A$ and the vector  $\mathbf{v}$  is
\begin{align*}
  \sum&(c_{\mathbf{x}-\mathbf{y}}\xi^{x_1}_1\xi^{x_2}_2\cdots\xi^{x_k}_k\mid\mathbf{x}\in\mathcal{G}(\mathbf{s}))\\ 
=&\sum(c_\mathbf{z}\xi^{y_1+z_1}_1\xi^{y_2+z_2}_2\cdots\xi^{y_k+z_k}_k\mid\mathbf{z}\in\mathcal{G}(\mathbf{s}))\\
=&\left(\sum(c_\mathbf{z}\xi^{z_1}_1\xi^{z_2}_2\cdots\xi^{z_k}_k\mid\mathbf{z}\in\mathcal{G}(\mathbf{s}))\right)\xi^{y_1}_1\xi^{y_2}_2\cdots\xi^{y_k}_k=\lambda v_{\mathbf{y}^*}.
\end{align*}

 Let  $\omega_i$ ($1\le i\le k$) be a primitive $s_i$-th  root of unity. For any
$\mathbf{y}=(y_1, y_2,\dots, y_k)$, let $\xi_i=\omega^{y_i}_i$  and define the eigenvector  $\mathbf{v}$  as above.  Consequently, the
component  $\mathbf{x}^{*}$  of   $\mathbf{v}$  equals  $\chi_\mathbf{y}(\mathbf{x})$.   Let  $P$  be the matrix
whose rows are the eigenvectors  $\mathbf{v}$, indexed by $\mathbf{y}^{*}$ for
$\mathbf{y}\in\mathcal{G}(\mathbf{s})$,  and let   $Q$  be the matrix whose columns are the same
eigenvectors, but indexed by $(-\mathbf{z}\mathbf{)}^{*}$ for $\mathbf{z}\in\mathcal{G}(\mathbf{s})$.  The $(\mathbf{y}^{*},\mathbf{z}^{*})$-entry
of   $\emph{PQ} $  is $\sum(\chi_{(\mathbf{y}-\mathbf{z})^*}(\mathbf{x})\mid\mathbf{x}\in\mathcal{G}(\mathbf{s}))$.  By the well-known result
mentioned above, $PQ=nI$.  Hence, $P$  is nonsingular, which
means that the eigenvectors are linearly independent.
\end{proof}

\section{Basis of interchange laws}

  Theorem 9.2 below is a significant generalization of Lemma 1.2.

 Let  $G$  be a finite abelian group generated by $\alpha$ and  $\beta$.
In  particular, $G$  is isomorphic to the direct product of two cyclic
groups.  We recall some notation from \cite{dK08}.   A \emph{vector}  means a
function from $G$ to the integers.  For each  $g\in G$, let  $\mathbf{e}_{g}$  be the
vector that is 1 at $g$ and 0 elsewhere.   For each  $g\in G$, we define
$\mathbf{v}_{g}=-\mathbf{e}_{g}+\mathbf{e}_{\alpha g}+\mathbf{e}_{\beta g}$.  By Theorem 2.2 of \cite{dK08}, the following two
conditions are equivalent:
\begin{itemize}
\item A basis for $\Sigma(G;\alpha,\beta)$ consists of its interchange laws.
\item The set  $\{\mspace{1mu}\mathbf{v}_{g}\mid g\in G\mspace{1mu}\}$ is linearly independent.
\end{itemize}

\begin{theorem}
Let $G$ be  $\langle\delta\rangle\times\langle\varepsilon\rangle$, the direct product of
cyclic groups of orders $m$ and $n$, respectively, and assume that
$\alpha=\delta^a\varepsilon^{a'}$  and $\beta=\delta^b\varepsilon^{b'}$.  The interchange laws form a basis for
$\Sigma(G;\alpha,\beta)$ iff $-1+\omega^a\xi^{a'}+\omega^b\xi^{b'}$  is never zero whenever  $\omega$ is an $m$-th
root of unity and  $\xi$  is an $n$-th root of unity.
\end{theorem}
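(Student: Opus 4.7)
My plan is to reduce the theorem to a direct application of Theorem 8.1 via the linear independence criterion from \cite{dK08}. By the characterization recalled just before the theorem, the interchange laws form a basis for $\Sigma(G;\alpha,\beta)$ exactly when the set $\{\,\mathbf{v}_g \mid g\in G\,\}$ is linearly independent. Since this set has exactly $|G|=mn$ vectors in the $mn$-dimensional space of vectors $G\to\mathbb{Z}$, linear independence is equivalent to nonsingularity of the $mn\times mn$ matrix $M$ whose row indexed by $g$ is $\mathbf{v}_g$.

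The second step is to recognize that $M$ is a level-$2$ multicirculant. Writing $G$ additively and identifying $G$ with $\mathcal{G}(\mathbf{s})$ for $\mathbf{s}=(m,n)$ via the bijection of \S8, the entry $M_{g,h}$ equals $-1$ if $h=g$, $+1$ if $h-g\in\{\alpha,\beta\}$, and $0$ otherwise. Thus $M_{g,h}$ depends only on $h-g$, so $M=\mathcal{M}(\mathbf{s})$ with top row determined by $c_{(0,0)}=-1$, $c_{(a,a')}=1$, $c_{(b,b')}=1$, and $c_\mathbf{x}=0$ for all other $\mathbf{x}\in\mathcal{G}(\mathbf{s})$.

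The third step is to apply Theorem 8.1 to read off the eigenvalues. Substituting the three nonzero coefficients into the formula, each eigenvalue of $M$ takes the form
\begin{equation*}
-1 + \omega^{a}\xi^{a'} + \omega^{b}\xi^{b'},
\end{equation*}
as $\omega$ ranges over all $m$-th roots of unity and $\xi$ ranges over all $n$-th roots of unity. Since $M$ is nonsingular iff all its eigenvalues are nonzero, combining with the criterion from the first step yields the stated equivalence.

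There is essentially no hard step once the correspondence is set up; the only point that requires care is the bookkeeping between the multiplicative notation for $\alpha=\delta^a\varepsilon^{a'}$ and $\beta=\delta^b\varepsilon^{b'}$ and the additive coordinates $(a,a')$, $(b,b')$ in $\mathcal{G}(\mathbf{s})$ used to index the multicirculant. I expect the cleanest presentation to fix this correspondence once and then invoke Theorem 8.1 as a black box.
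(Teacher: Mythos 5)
Your proposal is correct and follows essentially the same route as the paper: identify $G$ with $\mathcal{G}(m,n)$, observe that the matrix with rows $\mathbf{v}_g$ is the level-$2$ multicirculant $\mathcal{M}(m,n)$ with top row $\mathbf{v}_0$, read off the eigenvalues $-1+\omega^a\xi^{a'}+\omega^b\xi^{b'}$ from Theorem 8.1, and invoke Theorem 2.2 of the cited paper for the equivalence with linear independence. No gaps.
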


\begin{proof}
The group $G$ is isomorpic to  $\mathcal{G}(m,n)$, where
we convert to addition and replace 1 by 0.  Consequently,
$\mathbf{a}=(a,a')$ and  $\mathbf{b}=(b,b')$  are the images of $\alpha$ and $\beta$, respectively.
We use our bijection and index our vectors by  $0, 1,  \dots , mn-1$
rather than by $G$.  Thus, for $g\in \mathcal{G}(m,n)$,  $\mathbf{e}_{g}$ is now the vector that is 1
at $g^*$  and 0 elsewhere.  For $g\in \mathcal{G}(m,n)$, the definition of   $\mathbf{v}_{g}$  is now
$\mathbf{v}_{g}=-\mathbf{e}_{g}+\mathbf{e}_{a+g}+\mathbf{e}_{b+g}$.

 Let $\mathbf{v}_{0}$  be the top row of the multicirculant matrix
$A=\mathcal{M}(m,n)$.   Observe that the rows of  $A$  are the vectors  $\mathbf{v}_{g}$  for
$g$ in $\mathcal{G}(m,n)$.  By Theorem 8.1, each eigenvalue of  $A$ equals $-1+\omega^a\xi^{a'}
+\omega^b\xi^{b'}$,  where  $\omega$ is an $m$th root of unity and  $\xi$  is an  $n$th root
of unity.  Thus,  $\{\mspace{1mu}\mathbf{v}_{g}\mid g\in \mathcal{G}(m,n)\mspace{1mu}\}$ is linearly independent iff
$-1+\omega^a\xi^{a'}+\omega^b\xi^{b'}$  is never zero when $\omega$  and  $\xi$  are as previously specified.
Now apply Theorem 2.2 of \cite{dK08}, which we described above.
\end{proof}

\begin{theorem}
Let $G=\langle\alpha\rangle\oplus\langle\beta\rangle$, a direct sum, where $\alpha$
has order $m$ and $\beta$ has order $n$.  The interchange laws form a
basis for $\Sigma(G;\alpha,\beta)$ iff $m$ and $n$ are not both multiples of $6$.
\end{theorem}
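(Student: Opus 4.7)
The plan is to reduce everything to Theorem 9.1 and then perform an elementary analysis of the equation $\omega+\xi=1$ on the unit circle. Since $G$ is the direct sum $\langle\alpha\rangle\oplus\langle\beta\rangle$, I identify $\alpha$ with $\delta$ and $\beta$ with $\varepsilon$ in the hypothesis of Theorem 9.1, so the exponent vectors are $(a,a')=(1,0)$ and $(b,b')=(0,1)$. The condition supplied by Theorem 9.1 then simplifies to the statement that $-1+\omega+\xi\ne 0$ for every $m$-th root of unity $\omega$ and every $n$-th root of unity $\xi$. So what must be proved is the following elementary fact: $\omega+\xi=1$ admits a solution with $\omega^m=1$ and $\xi^n=1$ if and only if both $m$ and $n$ are divisible by $6$.

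For the forward direction, I would classify all unit complex $\omega,\xi$ with $\omega+\xi=1$. Writing $\omega=e^{i\theta}$ and $\xi=e^{i\phi}$ and separating real and imaginary parts yields
\begin{equation*}
\cos\theta+\cos\phi=1,\qquad \sin\theta+\sin\phi=0.
\end{equation*}
The imaginary equation forces either $\phi\equiv-\theta\pmod{2\pi}$ or $\phi\equiv\pi+\theta\pmod{2\pi}$. The second possibility makes $\cos\phi=-\cos\theta$, so the real equation collapses to $0=1$, which is impossible. Hence $\xi=\overline{\omega}$, and the real equation becomes $2\cos\theta=1$, giving $\theta=\pm\pi/3$. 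Consequently $\omega$ and $\xi$ are the two conjugate primitive $6$-th roots of unity. Such a pair satisfies $\omega^m=1$ and $\xi^n=1$ precisely when $6\mid m$ and $6\mid n$, completing the forward direction.

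For the reverse direction, if $6\mid m$ and $6\mid n$, the elements $\omega=e^{i\pi/3}$ and $\xi=e^{-i\pi/3}$ are respectively an $m$-th and an $n$-th root of unity, and $\omega+\xi=2\cos(\pi/3)=1$, so $-1+\omega+\xi=0$ and the interchange laws fail to form a basis by Theorem 9.1. Combining the two directions with Theorem 9.1 gives the stated equivalence.

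There is no real obstacle here: the only thing to be careful about is matching the bookkeeping of Theorem 9.1 to the direct-sum presentation (so that the exponent pairs $(a,a')$ and $(b,b')$ really are $(1,0)$ and $(0,1)$), and then recognizing that the whole problem has been reduced to asking which pairs of roots of unity sum to $1$. The mild subtlety worth flagging in the write-up is that the exceptional pair $(\omega,\xi)=(e^{i\pi/3},e^{-i\pi/3})$ is essentially unique up to swapping, which is why the divisibility condition involves the single number $6$ rather than a more complicated arithmetic condition on $m$ and $n$.
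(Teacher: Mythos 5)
Your proposal is correct and follows essentially the same route as the paper: reduce to Theorem 9.1 with exponent vectors $(1,0)$ and $(0,1)$, then observe that $\omega+\xi=1$ on the unit circle forces $\xi=\overline{\omega}$ with real part $1/2$, i.e.\ $\omega$ and $\xi$ are the conjugate primitive $6$-th roots of unity. Your write-up is just a more explicit version (via the angle parametrization) of the paper's terse argument that equal absolute imaginary parts force both real parts to be $1/2$.
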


\begin{proof}
By Theorem 9.1, we must determine when
$-1+\omega+\xi=0$  for an $m$th root of unity $\omega$  and an $n$th root of unity
$\xi$.  Since the absolute values of the imaginary parts of  $\omega$ and $\xi$
are equal, so are the absolute values of their real parts.  Thus, $1/2$
is the real part of both $\omega$ and $\xi$.  The result now follows.
\end{proof}

 When $m=n=2$ in Theorem 9.2, we obtain  Lemma  1.2.  Let
$A$  be the matrix in the proof of Lemma 1.2.  Clearly,  $A=\mathcal{M}(2,2)$
and its top row is as in the proof of Theorem 9.1 when $\mathbf{a}=(1,0)$
and  $\mathbf{b}=(0,1)$.  Since the eigenvalues of  $A$ are $1,-1,-1,-3$ by
Theorem 8.1, the determinant of $A$ is $-3$.

 For a finite \emph{cyclic} group $G$, Theorem 3.1 of
\cite{dK08} determines exactly when the interchange laws form a basis
for  $\Sigma(G;\alpha,\beta)$.

\end{document}